\def\revision#1{{\color{red}#1}}

\documentclass[final,5p]{elsarticle}
\usepackage{amsmath,amsthm,amssymb}
\usepackage{mathrsfs}
\usepackage{esint}  
\usepackage{color}
\usepackage{graphicx}
\usepackage{psfrag}
\usepackage{enumerate}
\usepackage{verbatim}

\def\H{\widetilde{H}}

\def\N{\mathbb{N}}
\def\R{\mathbb{R}}
\def\Z{\mathbb{Z}}

\def\KK{\mathcal{K}}
\def\NN{\mathcal{N}}

\def\TT{\mathcal{T}}

\def\diam{{\rm diam}}

\def\supp{{\rm supp}}

\newcommand{\set}[3][\big]{#1\{#2\,:\,#3#1\}}
\newcommand{\norm}[3][]{#1\|#2#1\|_{#3}}
\def\dual#1#2{\langle#1\,;\,#2\rangle}

\def\enorm#1{|\!|\!|#1|\!|\!|}

\numberwithin{equation}{section}
\numberwithin{figure}{section}
\newtheorem{theorem}{Theorem}[section]

\newtheorem{lemma}[theorem]{Lemma}

\newtheorem{algorithm}[theorem]{Algorithm}
\newtheorem{remark}[theorem]{Remark}

\newcounter{const}

\date{\today}

\def\MM{\mathcal M}
\def\XX{\mathcal X}
\def\Crel{C_{\rm rel}}
\def\Ceff{C_{\rm eff}}

\begin{document}
\begin{frontmatter}
\title{Adaptive 2D IGA boundary element methods}
\date{\today}

\author{Michael Feischl}
\ead{Michael.Feischl@tuwien.ac.at}

\author{Gregor Gantner\corref{cor1}}
\ead{Gregor.Gantner@tuwien.ac.at}
\cortext[cor1]{Corresponding Author}

\author{Alexander Haberl}
\ead{Alexander.e101.Haberl@tuwien.ac.at}

\author{Dirk Praetorius}
\ead{Dirk.Praetorius@tuwien.ac.at}

\begin{abstract}
We derive and discuss a~posteriori error estimators for Galerkin and collocation IGA boundary element methods for weakly-singular integral equations of the first-kind in 2D. While recent own work considered the Faermann residual error estimator for Galerkin IGA boundary element methods, the present work focuses more on collocation and weighted-residual error estimators, which provide reliable upper bounds for the energy error. Our analysis allows piecewise smooth parametrizations  of the boundary, local mesh-refinement, and related standard piecewise polynomials as well as NURBS. We formulate an adaptive algorithm which steers the local mesh-refinement and the multiplicity of the knots. Numerical experiments show that the proposed adaptive strategy leads to optimal convergence, and related IGA boundary element methods are superior to standard boundary element methods with piecewise polynomials.
\end{abstract}

\begin{keyword}
isogeometric analysis\sep boundary element method\sep collocation\sep a~posteriori error estimate\sep adaptive mesh-refinement
\end{keyword}

\end{frontmatter}

\section{Introduction} 


\subsection{Isogeometric analysis}
The central idea of isogeometric analysis (IGA) is to use the same ansatz functions for the discretization of the partial differential equation at hand as for the representation of the problem geometry. Usually,  $\Omega$ is represented in computer aided design (CAD) by means of NURBS, hierarchical splines, or T-splines. This concept, invented in~\cite{hughes2005} for finite element methods (IGAFEM) has proved very fruitful in applications~\cite{hughes2005,simpson2012}; see also the monograph \cite{bible}. Since CAD directly provides a parametrization of the boundary $\partial \Omega$, this makes the boundary element method (BEM) the most attractive numerical scheme, if applicable (i.e., provided that the fundamental solution of the differential operator is explicitly known). Isogeometric BEM (IGABEM) has first been considered in~\cite{igabem2d} for 2D resp. \cite{igabem3d} for 3D. While standard BEM with piecewise polynomials is well-studied in the literature, cf.~the monographs~\cite{ss,steinbach} and the references therein, the numerical analysis of IGABEM in essentially open. We refer to~\cite{simpson2012,laplaceiga,helmholtziga} for numerical experiments and to~\cite{stokesiga} for some quadrature analysis. A~posteriori error estimation has first been considered for Galerkin IGABEM in our recent work~\cite{fgp14}. In the present work, we extend the latter result to collocation IGABEM which is preferred in practice for its simpler assembly of the stiffness matrix.
\bigskip

\subsection{Model problem}
Let $\Omega\subset\R^2$ be a Lipschitz domain and $\Gamma\subseteq \partial\Omega$ be a compact, piecewise smooth part of the boundary with finitely many connected components. 
For a given right-hand side $f$, we consider the weakly-singular boundary integral equation
\begin{align}\label{eq:strong}
 V\phi(x):=-\frac{1}{2\pi}\int_\Gamma\log|x-y|\,\phi(y)\,dy = f(x)
 \quad\text{on }\Gamma
\end{align}
associated with the 2D Laplacian; see Section~\ref{section:preliminaries} below for the mathematical setting and the definition of the problem related energy norm $\enorm{\cdot}$. With some discrete ansatz space $\XX_h\subset L^2(\Gamma)$, the Galerkin BEM computes the unique solution $\phi_h\in\XX_h$ of the discrete variational formulation
\begin{align}\label{eq:discrete}
 \int_{\Gamma_{}} V\phi_h\,\psi_h\,dx
 = \int_{\Gamma_{}} f\psi_h\,dx
 \quad\text{for all }\psi_h\in\XX_h.
\end{align}
Note that $\XX_h\subset {L^{2}(\Gamma_{})}$ ensures $V\phi_h\in C(\Gamma)$. 
The collocation BEM computes $\phi_h\in\XX_h$ such that
\begin{align}\label{eq:collocation}
 V\phi_h(x_j) = f(x_j)
 \quad\text{for all }x_j\in \{x_1,\dots,x_{N_{\rm{col}}}\},
\end{align}
where the $x_j$ are appropriately chosen collocation points with $N_{\rm{col}}:=\dim\XX_h$; see Section \ref{subsec:collocation igabem}.
In either case~\eqref{eq:discrete}--\eqref{eq:collocation}, $\phi_h$ is computed by solving a linear system of equations

\subsection{A~posteriori error estimation for Galerkin IGABEM}
We assume that $\XX_h$ is associated to some partition $\TT_h$ of $\Gamma$ into a set of connected segments. For each
vertex $z$ of $\TT_h$, let $\omega_h({z}) := \bigcup\set{T\in\TT_h}{{z}\in T}$ 
denote the node patch. If $\XX_h$ is sufficiently rich (e.g., $\XX_h$ contains 
certain splines or NURBS), it is proved in~\cite{fgp14}
that Galerkin BEM guarantees reliability and efficiency
\begin{subequations}\label{eq:faermann}
\begin{align}
\begin{split}
 \Crel^{-1}\,\enorm{\phi-\phi_h}
 &\le \eta_h:=\Big(\sum_{{z}\in\NN_h}\eta_h(z)^2\Big)^{1/2}
\\& \le \Ceff\,\enorm{\phi-\phi_h}
\end{split}
\end{align}
with $\XX_h$-independent constants $\Ceff,\Crel>0$. Here, $r_h:=f-V\phi_h$ denotes the residual and
\begin{align}
 \eta_h(z)^2
 := \int_{\omega_h({z})}\!\int_{\omega_h({z})}\hspace*{-4mm}\frac{|r_h(x)-r_h(y)|^2}{|x-y|^2}\,dy\,dx
\end{align}
\end{subequations}
is some Sobolev-Slobodeckij seminorm, i.e., the unknown BEM energy
error  is controlled by some computable a~posteriori error estimator
$\eta_h$. Estimate~\eqref{eq:faermann} has first been
proved by Faermann~\cite{faermann2d} for closed $\Gamma_{}=\partial\Omega$ and standard spline spaces $\XX_h$ based on the
arclength parametrization.
Her result is generalized in
\cite{fgp14} to a more general setting which also includes isogeometric analysis. We note that~\cite{faermann2d,fgp14} show that the efficiency estimate $\eta_h\le C_{\rm eff}\,\enorm{\phi-\phi_h}$ holds even independently of the discretization and, in particular, for collocation.

\subsection{A~posteriori error estimation for collocation IGABEM}
In the present manuscript, we focus on the weighted-residual error estimator which has first been proposed in \cite{cs96,cc97} for standard BEM with piecewise polynomials and polygonal $\Gamma$. 
We prove that for Galerkin IGABEM~\eqref{eq:discrete} as well as collocation IGABEM~\eqref{eq:collocation}, 
there holds the upper bound
\begin{subequations}\label{eq:residual}
\begin{align}
 \begin{split}
 &\Crel^{-1}\,\enorm{\phi-\phi_h}\le \mu_h
 :=\Big(\sum_{{z}\in\NN_h}\mu_h(z)^2\Big)^{1/2},
 \end{split}
\end{align}
with an $\XX_h$-independent constant $\Crel>0$. Here, $r_h:=f-V\phi_h$ is again the residual and 
\begin{align}
 \mu_h(z)^2:={|\omega_h(z)|}\int_{\omega_h(z)} |r_h'(x)|^2\,dx
\end{align}
\end{subequations}
is a weighted $H^1$-seminorm,
where $(\cdot)'$ denotes the arc-length derivative and $|\omega_h(z)|$ is the length of the node patch.
For collocation BEM, we thus control the energy error by
\begin{align}
\begin{split}
 \Ceff^{-1}\,\eta_h 
 \le \enorm{\phi-\phi_h}
 \le\Crel\,\mu_h
\end{split}
\end{align}
which, however, involves different error estimators.
In addition to the global relation of the error estimators $\eta_h$ and $\mu_h$ and independently of the discretization, 
we prove
\begin{align}\label{eq:bound}
 \eta_h(z) \le C_{\rm loc}\,\mu_h(z)
 \quad\text{for all vertices $z$ of $\TT_h$},
\end{align}
where $C_{\rm loc}>0$ depends only on $\Gamma$.

\subsection{Outline}
Section~\ref{section:preliminaries} recalls the functional analytic framework,
provides the assumptions on $\Gamma$ and its parametrization $\gamma$, introduces the ansatz spaces, and presents an adaptive algorithm which is capable to control and adapt the multiplicity of the nodes as well as the local mesh-size {(Algorithm \ref{the algorithm})}. 
Section~\ref{section:numerics} provides the numerical evidence that the proposed adaptive IGABEM is superior to IGABEM with uniform mesh-refinement as well as to adaptive standard BEM with piecewise polynomials. Moreover,
we observe that collocation IGABEM leads to essentially the same convergence behavior
as Galerkin IGABEM, so that an adaptive collocation IGABEM may be favorable in practice.
Section~\ref{section:aposteriori} recalls the precise 
statement of~\eqref{eq:faermann} from \cite{fgp14} and gives a proof of~\eqref{eq:residual}--\eqref{eq:bound}. 
The concluding Section~\ref{section:conclusion} comments on our overall findings, open questions, and future research.

\section{Preliminaries}
\label{section:preliminaries}


\noindent
In this section, we collect the main assumptions on the boundary and its discretization and introduce the BEM ansatz spaces. Further details on Sobolev spaces and the functional analytic setting of weakly-singular integral equations, are found, e.g., in the monographs \cite{hsiao,mclean,ss} and the references therein.

Throughout, $|\cdot|$ denotes the absolute value of scalars, the Euclidean norm of vectors in $\R^2$, the measure of a set in $\R$ (e.g., the length of an interval), or the arclength of a curve in $\R^{2}$.
The respective meaning will be clear from the context.

We write $A\lesssim B$ to abbreviate $A\le cB$ with some constant $c>0$ which is clear from the context.
Moreover $A\simeq B$ abbreviates $A\lesssim B\lesssim A$.

\def\Cgamma{C_\Gamma}
\subsection{Function spaces}
For any measurable subset $\omega\subseteq\Gamma$ resp.\ any interval $\omega\subseteq \R$,  $L^2(\omega)$ denotes
the Lebesgue space of all square integrable functions with corresponding norm
\begin{align}
 \norm{u}{L^2(\omega)}^2:=\int_\omega |u(x)|^2\,dx.
\end{align}
If $u\in L^2(\omega)$ is differentiable along the arc, $u'$ denotes the arclength derivative.
Define the Sobolev space $H^1(\omega) := \set{u\in L^2(\omega)}{u'\in L^2(\omega)}$ with corresponding norm
\begin{subequations}
\begin{align}
 \norm{u}{H^1(\omega)}^2
 &:= \norm{u}{L^2(\omega)}^2
 + |u|_{H^1(\omega)}^2,\\
 |u|_{H^1(\omega)}^2 &:= \int_\omega |u'(x)|^2\,dx.
\end{align}
\end{subequations}
Furthermore, define the Sobolev space
$H^{1/2}(\omega) := \set{u\in L^2(\omega)}{\norm{u}{H^{1/2}(\omega)}<\infty}$  with corresponding
norm
\begin{subequations}
\begin{align}\label{eq:SS-norm}
 \norm{u}{H^{1/2}(\omega)}^2
 &:= \norm{u}{L^2(\omega)}^2
 + |u|_{H^{1/2}(\omega)}^2,\\
 |u|_{H^{1/2}(\omega)}^2 &:= \int_\omega\int_\omega\frac{|u(x)-u(y)|^2}{|x-y|^2}\,dy\,dx.
\end{align}
\end{subequations}
The dual space of $H^{1/2}(\omega)$ is $\H^{-1/2}(\omega)$, where duality
is understood with respect to the extended $L^2(\omega)$-scalar product, i.e.,
for $u\in H^{1/2}(\omega)$ and $\phi\in L^2(\omega)$, it holds
\begin{align}
 \dual{u}{\phi}_\omega = \int_\omega u(x)\phi(x)\,dx.
\end{align}
We note that $H^{1/2}(\Gamma)\subset L^2(\Gamma)\subset \H^{-1/2}(\Gamma)$ form a Gelfand triple and all inclusions are dense and compact.

Amongst other equivalent definitions of $H^{1/2}(\omega)$ are the characterization
as trace space of functions {in} $H^1(\Omega)$ as well as equivalent interpolation techniques.
All these definitions provide the same space but different norms, where
norm equivalence constants depend only  on $\omega$.

\subsection{Weakly-singular integral equation}
The operator $V$ from~\eqref{eq:strong} extends to a linear and continuous operator $V:\H^{-1/2}(\Gamma)\to H^{1/2}(\Gamma)$ with additional stability $V:L^2(\Gamma)\to H^1(\Gamma)$.
We additionally suppose that $V$ is even an elliptic isomorphism, which is satisfied, e.g., if $\diam(\Omega)<1$. In particular, $\dual{V(\cdot)}{(\cdot)}_\Gamma$ is thus a scalar product on $\H^{-1/2}(\Gamma)$, and the induced energy norm
\begin{align}
 \enorm{\psi}^2 := \dual{V\psi}{\psi}_\Gamma
 \quad\text{for }\psi\in\H^{-1/2}(\Gamma)
\end{align}
is an equivalent norm on $\H^{-1/2}(\Gamma)$.

Given $f\in H^{1/2}(\Gamma)$, the weakly-singular integral equation~\eqref{eq:strong} is equivalently reformulated in variational form: Find $\phi\in\H^{-1/2}(\Gamma)$ such that
\begin{align}\label{eq:weak}
 \dual{V\phi}{\psi}_\Gamma
 = \dual{f}{\psi}_\Gamma \quad\text{for all }\psi\in\H^{-1/2}(\Gamma_{}).
\end{align}
The Lax-Milgram lemma thus applies and proves existence and uniqueness of the solution $\phi\in\H^{-1/2}(\Gamma)$ of~\eqref{eq:weak} resp.~\eqref{eq:strong}. 

\subsection{Parametrization of boundary}
\label{subsec:boundary parametrization}
Let $\Gamma = \bigcup_i\Gamma_i\subseteq \partial\Omega$ be decomposed into its finitely many connected
components $\Gamma_i$. Then, 
\begin{align*}
 &\|u\|^2_{H^{1/2}(\Gamma)}\simeq \sum_{i} \|u\|^2_{H^{1/2}(\Gamma_i)}\quad \text{for all } u\in H^{1/2}(\Gamma);
\end{align*}
see, e.g., \cite[Section 2.2]{fgp14}.
The usual piecewise polynomial and NURBS basis functions have connected support
and are hence supported by some \emph{single} $\Gamma_i$ each. Without loss of generality and to ease the mathematical proofs, we may therefore  assume that $\Gamma$ is connected. 
All results remain valid for non-connected $\Gamma$.

We assume that either $\Gamma=\partial\Omega$ is parametrized by a closed continuous and
piecewise {two times} continuously differentiable path $\gamma:[a,b]\to\Gamma$ such
that the restriction $\gamma|_{[a,b)}$ is even bijective, or that $\Gamma\subsetneqq\partial\Omega$ is parametrized by a bijective continuous and piecewise two times continuously differentiable path $\gamma:[a,b]\to\Gamma$.  
For $\Gamma=\partial\Omega$, we denote the $(b-a)$-periodic extension to $\R$ also by $\gamma$.
For the left and right derivative of $\gamma$, we assume that $\gamma^{\prime_\ell}(t)\neq 0$ for $t\in(a,b]$ and $\gamma^{\prime_r}(t)\neq 0$  for $t\in [a,b)$.
Moreover we assume that $\gamma^{\prime_\ell}(t)
+c\gamma^{\prime_r}(t)\neq0$ for all $c>0$ and $t\in[a,b]$ resp. $t\in(a,b)$.

By  $\gamma_L:[0,L]\to\Gamma$, we denote the arclength parametri\-zation, i.e.,
$|\gamma_L^{\prime_\ell}(t)| = 1 = |\gamma_L^{\prime_r}(t)|$, and its periodic extension. Then, elementary differential geometry yields bi-Lipschitz continuity
\begin{align}\label{eq:bi-Lipschitz}
\begin{split}
 \Cgamma^{-1} \le \frac{|\gamma_L(s)-\gamma_L(t)|}{|s-t|}\le\Cgamma
 \quad\text{for all }s,t\in\R, \\{\text{ with }\begin{cases}
 |s-t|\le \frac{3}{4}\,L, \text{ for }\Gamma=\partial\Omega,\\
  s\neq t\in [0,L], \text{ for }\Gamma\subsetneqq\partial\Omega;
\end{cases}}
\end{split}
\end{align}
see, e.g., \cite[Lemma 2.1]{diplarbeit} for the proof for $\Gamma=\partial\Omega$ which even simplifies for $\Gamma\subsetneqq \partial\Omega$.
%
Let $I\subseteq [a,b]$.
Suppose $|I|\le \frac{3}{4}L$ for $\Gamma=\partial\Omega$.
Then,  \eqref{eq:bi-Lipschitz} implies
\begin{align}\label{eq:equivalent Hsnorm}
\begin{split}
\Cgamma^{-1}|u\circ\gamma_{L}|_{H^{1/2}(I)} &\leq |u|_{H^{1/2}(\gamma_L(I))}
\\&
\leq \Cgamma|u\circ\gamma_{L}|_{H^{1/2}(I)}
\end{split}
\end{align}
for all $u\in H^{1/2}(\Gamma)$.
\subsection{Discretization of boundary}

For the discretization, let $\mathcal{T}_h=\{T_1,\dots,T_n\}$ be a partition of $\Gamma$ into compact and connected segments $T_j$.
The endpoints of the elements of $\mathcal{T}_h$ form the set of nodes 
\begin{align*}
\mathcal{N}_h=\begin{cases}
\set{z_j}{j=1,\dots,n}&\text{ for }\Gamma=\partial\Omega,\\
\set{z_j}{j=0,\dots,n}&\text{ for }\Gamma\subsetneqq\partial\Omega.
\end{cases}
\end{align*}
The arclength of each element $T\in \mathcal{T}_h$ is denoted by $h_{T}$.
Moreover,  the shape regularity constant is defined by
\begin{align*}
\kappa(\mathcal{T}_h)&:=\max\set{h_{T}/h_{T'}}{T,T'\in\mathcal{T}_h, T\cap T'\neq \emptyset}.
\end{align*}
For $\Gamma=\partial\Omega$,  we extend the nodes, elements, and their length periodically. Moreover, we suppose
\begin{align}\label{eq:h small}
\max_{T\in\TT_h} h_T  \le |\Gamma|/4
\quad\text{for }\Gamma=\partial\Omega.
\end{align}

\subsection{Discretization of parameter domain}

Given $\gamma:[a,b]\to \Gamma$, the partition $\mathcal{T}_h$ induces a partition $\check{\mathcal{T}}_h=\{\check{T}_1,\dots,\check{T}_n\}$ of the parameter domain $[a,b]$.
Let $a=\check{z}_0<\check{z}_1<\dots<\check{z}_n=b$ be the endpoints of the elements of $\check{\mathcal{T}}_h$. 
We assume $\check{T}_j=[\check{z}_{j-1},\check{z}_j]$, $\gamma(\check{T_j})=T_j$, and $\gamma(\check{z}_j)=z_j$. 
We define 
\begin{align*}
\check{\mathcal{N}}_h=\begin{cases}
\set{\check{z}_j}{j=1,\dots,n}&\text{ for }\Gamma=\partial\Omega,\\
\set{\check{z}_j}{j=0,\dots,n}&\text{ for }\Gamma\subsetneqq \partial\Omega.
\end{cases}
\end{align*}
The length of each $\check{T}\in \check{\mathcal{T}}_h$ is denoted by $h_{\check{T}}$.
Moreover, we define the shape regularity constant on $[a,b]$ by
\begin{align*}
\kappa(\check{\mathcal{T}_h})&:=\max\set{h_{\check{T}}/h_{\check{T}'}}{\check{T},\check{T}'\in\check{\mathcal{T}_h}, \gamma(\check{T})\cap \gamma(\check{T}')\neq \emptyset}.
\end{align*}
Note that $\kappa({\mathcal{T}_h})\simeq\kappa(\check{\mathcal{T}_h})$, where the hidden constants depend only on the parametrization $\gamma$.

\subsection{B-splines and NURBS in the parameter domain}
We consider \textit{knots} $\check{\mathcal{K}}:=(t_i)_{i\in\Z}$ on $\R$ with $t_{i-1}\leq t_{i}$ for $i\in \Z$ and $\lim_{i\to \pm\infty}t_i=\pm \infty$.
For the multiplicity of any knot $t_i$, we write $\#t_i$.
We denote the corresponding set of \textit{nodes} $\check{\mathcal{N}}:=\set{t_i}{i\in\Z}=\set{\check{{z}}_{j}}{j\in \Z}$ with $\check{{z}}_{j-1}<\check{{z}}_{j}$ for all $j\in\Z$. 
For $i\in\Z$ and $p\in\N_0$, the $i$-th \textit{B-Spline} of degree $p$ is defined inductively by
\begin{align*}
B_{i,p}^{\check{\mathcal{K}}}
\!:=\!B_{i,p}
\!:=\!
\begin{cases}
\chi_{[t_{i-1},t_{i})}&\!\!\!\text{for }p=0,\\
\beta_{i-1,p} B_{i,p-1}+(1\!-\!\beta_{i,p}) B_{i+1,p-1}&\!\!\!\text{for } p>0,
\end{cases}
\end{align*}
where, for $t\in \R$,
\begin{align*}
\beta_{i,p}(t):=
\begin{cases}
\frac{t-t_i}{t_{i+p}-t_i} \quad &\text{for } t_i\neq t_{i+p},\\
0 \quad &\text{for } t_i= t_{i+p}.
\end{cases}
\end{align*}
We collect some basic properties of B-splines from \cite{Boor-SplineBasics}:

\begin{lemma}[{\cite[Theorem 6, Section 2 and page 9--10]{Boor-SplineBasics}}]
\label{lem:properties for B-splines}
 For $p\in \N_0$, the following assertions hold: 
\begin{enumerate}[\rm(i)]
\item \label{item:spline basis} Let $I=[a,b)$ be a finite interval.
Then,
\begin{equation}
\set{B_{i,p}|_I}{i\in \Z, B_{i,p}|_I\neq 0}
\end{equation}
is a basis for the space of all right-continuous $\check{\mathcal{N}}$-piece\-wise polynomials of degree lower or equal $p$ on $I$ and which are, at each knot $t_i$, $p-\#t_i$ times continuously differentiable if $p-\#t_i\geq 0$.

\item \label{item:B-splines local} For $i\in\Z$,  $B_{i,p}$ vanishes outside the interval $[t_{i-1},t_{i+p})$. 
It is positive on the open interval $(t_{i-1},t_{i+p})$.
\item \label{item:B-splines determined} For $i\in \Z$,  $B_{i,p}$ is completely determined by the $p+2$ knots $t_{i-1},\dots,t_{i+p}$. 
\item\label{item:B-splines partition} The  B-splines of degree $p$ form a locally finite partition of unity, i.e.,
$\sum_{i \in\Z} B_{i,p}=1\text{ on }\R$.\qed
\end{enumerate}
\end{lemma}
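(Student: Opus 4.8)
The statement collects four standard facts about B-splines, and the plan is to treat (ii)--(iv) by a short induction on the degree $p$ using the Cox--de~Boor recursion, and to reduce (i) to the classical Curry--Schoenberg theorem.

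\emph{Parts (ii) and (iii).} I would induct on $p$. For $p=0$ the support claim in (ii) is immediate from $B_{i,0}=\chi_{[t_{i-1},t_i)}$. For $p>0$ the recursion expresses $B_{i,p}$ as a nonnegative combination of $B_{i,p-1}$ and $B_{i+1,p-1}$, whose supports (by the induction hypothesis) lie in $[t_{i-1},t_{i+p-1})$ resp.\ $[t_i,t_{i+p})$; hence $\supp B_{i,p}\subseteq[t_{i-1},t_{i+p})$, and positivity on the open interval follows because the weight functions $\beta_{i-1,p}$ and $1-\beta_{i,p}$ are strictly positive on the relevant subintervals, which together cover $(t_{i-1},t_{i+p})$. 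For (iii) the same recursion shows $B_{i,p}$ is assembled from $B_{i,p-1},B_{i+1,p-1}$ and the coefficients $\beta_{i-1,p},\beta_{i,p}$, all of which involve only $t_{i-1},\dots,t_{i+p}$; a downward induction in $p$ then gives the claim.

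\emph{Part (iv).} Again induct on $p$. For $p=0$ the nonempty intervals $[t_{i-1},t_i)$ tile $\R$, so $\sum_i B_{i,0}=1$. For $p>0$, insert the recursion, split the (locally finite, by (ii)) sum, and shift the index in the second half:
\[
 \sum_{i\in\Z}B_{i,p}
 =\sum_{i\in\Z}\beta_{i-1,p}B_{i,p-1}+\sum_{i\in\Z}(1-\beta_{i,p})B_{i+1,p-1}
 =\sum_{i\in\Z}\bigl(\beta_{i-1,p}+1-\beta_{i-1,p}\bigr)B_{i,p-1}
 =\sum_{i\in\Z}B_{i,p-1}=1 .
\]

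\emph{Part (i).} This is the deepest point and is exactly the Curry--Schoenberg theorem. One inclusion is routine: by (ii) each $B_{i,p}|_I$ is a right-continuous $\check{\mathcal N}$-piecewise polynomial of degree $\le p$, and an induction on $p$ via the recursion tracks the attained smoothness at each knot, showing that a knot of multiplicity $\#t_i$ reduces it there to $p-\#t_i$ whenever this is nonnegative. For the reverse inclusion I would first establish local linear independence of the nonvanishing $B_{i,p}|_I$ --- e.g.\ via Marsden's identity, which writes every polynomial of degree $\le p$ as an explicit B-spline combination and hence shows that on each knot interval the restricted B-splines already span all polynomials of degree $\le p$ --- and then compare dimensions: the number of indices $i$ with $B_{i,p}|_I\not\equiv 0$ equals the dimension of the spline space described in (i). Independence together with the matching count yields the basis property. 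The main obstacle is precisely this bookkeeping: unlike (ii)--(iv), which are one-line inductions, the spanning-plus-independence argument at higher-multiplicity knots genuinely needs the Marsden/Curry--Schoenberg machinery, and since it is entirely classical the most economical option in the paper is to invoke \cite[Theorem~6, Section~2 and pp.~9--10]{Boor-SplineBasics} directly.
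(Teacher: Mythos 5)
The paper does not prove this lemma at all---it is quoted verbatim from de~Boor's survey \cite{Boor-SplineBasics}, so there is no in-paper argument to compare against. Your sketch is a correct reconstruction of the standard proofs: the inductions for (ii)--(iv) via the Cox--de~Boor recursion are sound (including the telescoping index shift for the partition of unity and the covering of $(t_{i-1},t_{i+p})$ by the two positivity intervals), and deferring (i) to the Curry--Schoenberg/Marsden machinery is exactly what the paper's citation accomplishes.
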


In addition to the knots $\check{\mathcal{K}}=(t_i)_{i\in\Z}$, we consider weights $\mathcal{W}:=(w_i)_{i\in\Z}$ with $w_i>0$.
For $i\in \Z$ and $p\in \N_0$, we define the $i$-th  \textit{non-uniform rational B-Spline} (\textit{NURBS}) of degree $p$
\begin{equation}
R_{i,p}^{\check{\mathcal{K}},\mathcal{W}}:=R_{i,p}:=\frac{w_iB_{i,p}}{\sum_{\ell\in\Z}  w_{\ell}B_{\ell,p}}.
\end{equation}
Note that the denominator is positive and locally finite.

For any $p\in\N_0$, we define the vector spaces
\begin{align}
\mathscr{S}^p(\check{\mathcal{K}})&:=\left\{\sum_{i\in\Z}a_i B_{i,p}^{\check{\mathcal{K}}}:a_i\in\R\right\},\\
\label{eq:NURBS space defined}
\mathscr{N}^p(\check{\mathcal{K}},\mathcal{W})&:=\left\{\sum_{i\in\Z}a_i R_{i,p}^{\check{\mathcal{K}},\mathcal{W}}:a_i\in\R\right\}.
\end{align}

\subsection{NURBS on the boundary}

For $\Gamma=\partial\Omega$, each node $\check{z}\in \check{\mathcal{N}}_h$ has a multiplicity $\#\check{z}\leq p+1$. 
This induces a sequence of non-decreasing knots $\check{\mathcal{K}}_h=(t_i)_{i=1}^N$ on $(a,b]$.
Let $\mathcal{W}_h=(w_{i})_{i=1}^N$ be a sequence of weights on these knots. 
We extend the knot sequence $(b-a)$-periodically to $(t_i)_{i\in \Z}$ and the weight sequence to $(w_i)_{i\in \Z}$ by $w_{N+i}:=w_i$ for $i\in \Z$.
For the extended sequences, we also write $\check{\mathcal{K}}_h$ and $\mathcal{W}_h$. 
We set
\begin{equation}
\widehat{\mathscr{N}}^p(\check{\mathcal{K}}_h,\mathcal{W}_h):=\mathscr{N}^p(\check{\mathcal{K}}_{h},\mathcal{W}_{h})|_{[a,b)}\circ \gamma|_{[a,b)}^{-1}.
\end{equation}

For $\Gamma_{}\subsetneqq \partial\Omega$, each node $\check{z}\in \check{\mathcal{N}}_h$ has a  multiplicity $\#\check{z}\leq p+1$ such that $\#\check{z}_0=\#\check{z}_n=p+1$.
This induces a sequence of non-decreasing knots {\color{black}$\check{\mathcal{K}}_h=(t_i)_{i=-p}^N$} on $[a,b]$.
Let {\color{black}$\mathcal{W}_h=(w_{i})_{i=1}^{N}$} be a sequence of weights.
We extend the sequences arbitrarily to $\check{\mathcal{K}}_h=(t_i)_{i\in\Z}$ with $t_i\leq t_{i+1}$ for $i\in\Z$, $a>t_{i}\to-\infty$ for $i<-p$, and $b<t_i\to \infty$ for $i>N$, and $\mathcal{W}_h=(w_i)_{i\in \Z}$ with $w_i>0$. 
We set
\begin{equation}
\widehat{\mathscr{N}}^p(\check{\mathcal{K}}_h,\mathcal{W}_h):=\mathscr{N}^p(\check{\mathcal{K}}_h,\mathcal{W}_h)|_{[a,b]}\circ \gamma^{-1}.
\end{equation}
Due to Lemma \ref{lem:properties for B-splines} \eqref{item:B-splines local}--\eqref{item:B-splines determined}, this definition does not depend on how the sequences are extended.

\subsection{Collocation IGABEM}\label{subsec:collocation igabem}

In this section, we show how to choose the collocation points $x_j$ for $j=1,\dots,N_{\rm col}$ in~\eqref{eq:collocation}.
First, we note that Lemma~\ref{lem:properties for B-splines} \eqref{item:spline basis} implies that
\begin{align}\label{eq:closed basis}
\big\{ R_{i,p}|_{[a,b)}:i=1-p,\dots,N-\#b+1\big\}\circ\gamma|_{[a,b)}^{-1}
\end{align}
for $\Gamma=\partial\Omega$ resp.
\begin{align}\label{eq:open basis}
\big\{ R_{i,p}|_{[a,b]}:i=1-p,\dots,N-p\big\}\circ\gamma^{-1}
\end{align}
for $\Gamma\subsetneqq\partial\Omega$
forms a basis of $\widehat{\mathscr{N}}^p(\check{\KK}_h,\mathcal{W}_h)$.
Recall $\#b=p+1$ for $\Gamma\subsetneqq\partial\Omega$.
For simplicity, suppose $\#b=p+1$ also for $\Gamma=\partial\Omega$.
This gives
\begin{align}
N_{\rm col}=N.
\end{align}
For $j=1,\dots,N$, the collocation point $x_j$ is defined through the arithmetic mean of $p+2$ knots in the parameter domain
\begin{align}
 x_j = \gamma(\check{x}_j)
 \quad\text{with}\quad
 \check x_j:=\frac{\sum_{k=j-p-1}^{j} t_k}{p+2}.
\end{align}

\subsection{Adaptive algorithm}\label{subsec:algorithm}

Finally, we recall an adaptive algorithm from our preceding work~\cite{fgp14}, which 
steers the $h$-refinement of the partition $\mathcal{T}_h$ as well as the increase of the multiplicity of the nodes $\mathcal{N}_h$. 
While \cite{fgp14} considered $\eta_h$ for Galerkin IGABEM, the current focus is on $\mu_h$ and collocation IGABEM.

Suppose that $\Gamma$ is represented by a NURBS curve of degree $p\in\N_0$.
This induces the initial partition $\TT_0$ of $\Gamma$ with nodes $\mathcal N_0$, 
related nodes $\check{\mathcal{N}}_0$ in the parameter domain, and 
positive weights $\mathcal{W}_0$. Each node has a multiplicity lower or equal 
$p+1$, where for $\Gamma\subsetneqq \partial\Omega$ or collocation IGABEM we 
suppose $\#a=\#b=p+1$. For $\Gamma=\partial\Omega$, we suppose $h_T\leq |\Gamma|/4$ 
for all $T\in \mathcal{T}_0$.

As the initial trial space, we consider
\begin{equation}
\widehat{\mathscr{N}}^p(\check{\mathcal{K}}_0,\mathcal{W}_0)\subset L^2(\Gamma_{})\subset{H}^{-1/2}(\Gamma_{}).
\end{equation}
Fix an error estimator $\varrho_h\in\{\eta_h,\mu_h\}$.
The nodal contributions $\varrho_h(z)$ from \eqref{eq:faermann} resp.~\eqref{eq:residual} are used to steer knot insertion from $\check{\mathcal{K}}_{h}$ to the following knots $\check{\mathcal{K}}_{H}$.
The new weights $\mathcal{W}_{H}$ are uniquely chosen such that the denominator of the NURBS functions does not change. In particular, this implies nestedness 
\begin{equation}
\widehat{\mathscr{N}}^p(\check{\mathcal{K}}_h,\mathcal{W}_h)\subseteq \widehat{\mathscr{N}}^p(\check{\mathcal{K}}_H,\mathcal{W}_H)
\end{equation}
 of the related NURBS spaces.
Since the weights in $\mathcal W_H$ are just convex combinations of the weights in $\mathcal{W}_0$, it holds $\min \mathcal{W}_0 \le \min\mathcal{W}_{H}\le \max\mathcal{W}_{H}\le \max\mathcal{W}_0$.
For details, we refer to \cite[Section 4.2]{fgp14}.

Then, the adaptive algorithm reads as follows:

\begin{algorithm}\label{the algorithm}
\textbf{Input:} Adaptivity parameter $0<\theta\le1$, polynomial order $p\in \N_0$, initial partition $\TT_0=\mathcal{T}_h$ with knots $\check{\KK}_0=\check{\KK}_h$, initial weights $\mathcal{W}_0=\mathcal{W}_h$.\\
\textbf{Adaptive loop:} Iterate the following steps {\rm(i)}--{\rm(vi)}, until $\varrho_h$ is sufficiently small:
\begin{itemize}
\item[\rm(i)] Compute  approximation $\phi_h\in \widehat{\mathscr{N}}^p(\check{\mathcal{K}}_h,\mathcal{W}_h)$ from Gal-erkin BEM \eqref{eq:discrete} resp.\ collocation BEM~\eqref{eq:collocation}.
\item[\rm(ii)] Compute indicators $\varrho_h({z})$
for all nodes ${z}\in\NN_h$.
\item[\rm(iii)] Determine a set $\MM_h\subseteq\NN_h$ of minimal cardinality such that
\begin{align}
 \theta\,\varrho_h^2 \le \sum_{{z}\in\MM_h}\varrho_h({z})^2.
\end{align}
\item[\rm(iv)] If both nodes of an element $T\in\mathcal{T}_h$ belong to $\mathcal{M}_h$, $T$  will be marked.
\item[\rm(v)] For all other nodes in $\mathcal{M}_h$, the multiplicity will be increased if it is smaller than $p+1$, otherwise the elements which contain one of these nodes $z\in\mathcal{M}_h$, will be marked.
\item[\rm(vi)] Refine all marked elements $T\in\mathcal{T}_h$ by bisection (insertion of a node with multiplicity one) of the corresponding $\check{T}\in \check{\mathcal{T}}_h$.
Use further bisections to guarantee that the new partition $\mathcal{T}_{H}$ satisfies
\begin{align}\label{eq:kappa small}
\kappa(\check{\mathcal{T}}_{H})\leq 2\kappa(\check{\mathcal{T}}_0).
\end{align}
Update $h\mapsto H$, i.e., replace $\TT_h$ by $\TT_H$.
\end{itemize}
\textbf{Output:} Adaptively generated partition $\TT_h$ with corresponding solution $\phi_h$ and error estimator~$\varrho_h$.\qed
\end{algorithm}

\begin{remark}
{\rm(i)} While $\theta=1$ leads essentially to uniform refinement, $\theta\ll1$ leads to highly adapted partitions. Note that the smaller $\theta$, the more iterations of the adaptive loop are required. In our experiments below, $\theta=0.75$ appeared to be an appropriate compromise which led to optimal convergence behavior.\\
{\rm(ii)} The estimate~\eqref{eq:kappa small} in step~{\rm(iv)} of the adaptive algorithm can be achieved by some extended 1D bisection algorithm from~\cite{feischl}. The latter guarantees that the overall number of elements is bounded by the sum of elements in the initial partition plus the number of marked elements.\qed
\end{remark}



\section{Numerical experiments}
\label{section:numerics}

\noindent
In this section, we empirically investigate the performance of Algorithm \ref{the algorithm} for Galerkin as well as collocation IGABEM in three typical situations: In  Section~\ref{subsec:singular sol}, the boundary $\Gamma=\partial\Omega$ is closed and the solution exhibits a generic (i.e., geometry induced) singularity.
In Section \ref{subsec:jump sol}, the solution is smooth on $\Gamma=\partial\Omega$, but has certain jumps which require discontinuous ansatz functions.
In Section \ref{subsec:slit problem}, we consider a slit problem.
In all examples, the exact solution is known.
This allows to analyze  the reliability and efficiency of the proposed estimators.

The boundary part $\Gamma$  is  parametrised by a NURBS curve $\gamma$, i.e., the parametrisation has the special form
\begin{equation}\label{eq:NURBS curve}
\gamma(t)=\sum_{i\in \Z} C_i R_{i,p}^{\check{\mathcal{K}}_\gamma,\mathcal{W}_\gamma}(t)
\end{equation}
for all $t\in [a,b]$.
Here,  $p\in \N$ is the polynomial degree, $\check{\mathcal{K}}_\gamma$ and $\mathcal{W}_\gamma$ are knots and weights as in Section \ref{subsec:algorithm} and $(C_i)_{i\in \Z}$ are \textit{control points} in $\R^2$ which are periodic for closed $\Gamma=\partial\Omega$.

We choose the same polynomial degree $p$ for our ansatz spaces $\XX_h= \widehat{\mathscr{N}}^p(\check{\KK}_h,\mathcal{W}_h)$.
For the initial knots and weights, we choose $\check{\KK}_h=\check{\KK}_{\gamma}$ and $\mathcal{W}_h=\mathcal{W}_{\gamma}$.
As the ansatz spaces are nested, it always holds
\begin{align}
\gamma_1,\gamma_2 \in \mathscr{N}^p {(\check{\KK}_h,\mathcal{W}_h)}|_{[a,b]},
\end{align}
where $\gamma_1,\gamma_2$ denote the first resp. second component of $\gamma$.
Therefore, this approach reflects the main idea of isogeometric analysis, i.e., the same space is used for the geometry and for the approximation.
For adaptive Galerkin IGABEM as well as adaptive collocation IGABEM, we compare uniform refinement, where $\mathcal{M}_{h}=\mathcal{N}_{h}$ and hence all elements are refined, and adaptive refinement with $\theta=0.75$.
In addition, we also consider  discontinuous piecewise polynomials.
Note that this is formally only a special case if $w_j=1$ for all weights $w_j$ of  $\mathcal{W}_h$ and $\#z_j=p+1$ for all nodes $z_j\in \mathcal{N}_h$.

As basis for the considered  ansatz spaces, we use \eqref{eq:closed basis} resp. \eqref{eq:open basis}.
To calculate the Galerkin matrix, the collocation matrix, the Faermann error estimator, and the weighted-residual error  estimator, we transform the weakly-singular integrands into a sum of a smooth part and a logarithmically singular part.
Then, we use adapted Gauss quadrature to compute the resulting integrals with appropriate accuracy;
 see \cite[Section 5]{diplarbeit} for details.
For the weighted-residual error estimator \eqref{eq:residual}, we replace $|\omega_h(z)|$ by the length $|\gamma^{-1}(\omega(z))|$, since this eases the calculation.
Note that $|\omega_h(z)|$ $\simeq|\gamma^{-1}(\omega_h(z))|$, where the hidden constants  depend only on the parametrization $\gamma$.

To calculate, the exact error, we proceed as follows:
Let $\phi_{h}^{\rm gal}\in \XX_h$ be the Galerkin approximation with $\boldsymbol{c_h^{\rm gal}}$ the corresponding coefficient vector.
Let  $\phi_{h}^{\rm col}\in \XX_h$ be  the collocation approximation  with $\boldsymbol{c_h^{\rm col}}$ the corresponding coefficient vector.
Let $\boldsymbol{V_h^{\rm gal}}$ be the Galerkin matrix of the $h$-th step.
With the Galerkin orthogonality and the energy norm $\enorm{\phi}^2=\dual{V\phi}{\phi}$, obtained by Aitken's $\Delta^2$-extrapolation, we can compute the energy error as
\begin{align}\label{eq:error calc gal}
\begin{split}
\enorm{\phi-\phi_h^{\rm gal}}^2&=\enorm{\phi}^2-\enorm{\phi_h^{\rm gal}}^2\\
&=\enorm{\phi}^2-\dual{\boldsymbol{V_h^{\rm gal}}\boldsymbol{c_h^{\rm gal}}}{ \boldsymbol{c_h^{\rm gal}}},
\end{split}
\end{align}
resp.
\begin{align}\label{eq:error calc col}
&\enorm{\phi-\phi_h^{\rm col}}^2=\enorm{\phi-\phi_h^{\rm gal}}^2-\enorm{\phi_h^{\rm gal}-\phi_h^{\rm col}}^2\\
&\quad=\enorm{\phi-\phi_h^{\rm gal}}^2-\dual{\boldsymbol{V_h^{\rm gal}}(\boldsymbol{c_h^{\rm gal}}-\boldsymbol{c_h^{\rm col}})}{(\boldsymbol{c_h^{\rm gal}}-\boldsymbol{c_h^{\rm col}})}.\notag
\end{align}

\subsection{Laplace-Dirichlet problem}

In the first two examples, we consider the Laplace-Dirichlet problem
\begin{align}\label{eq:Laplace}
\begin{split}
-\Delta u=0\text{ in }{\Omega}\quad \text{ and }\quad u=g\text{ on } \Gamma_{}
\end{split}
\end{align}
for given Dirichlet data $g\in {H}^{1/2}(\Gamma_{})$ and closed boundary $\Gamma=\partial\Omega$.
The problem is equivalent to the integral equation \eqref{eq:strong} with $f=(K+\sigma)g$, i.e.
\begin{equation}\label{eq:Symmy}
V\phi =(K+\sigma) g\quad\text{on }\Gamma,
\end{equation}
where 
\begin{align}\label{eq:double-layer}
\begin{split}
Kg(x):= -\frac{1}{2\pi}\int_{\Gamma_{}} g(y)\partial_{\nu(y)}\log (|x-y|)  \,dy 
\end{split}
\end{align}
denotes the \textit{double-layer integral operator} and $\sigma(x)=1/2$ for all $x\in \Gamma$ except of the corners, where $\sigma(x)=\alpha/(2\pi)$ with the corresponding interior angle $\alpha$.
The unique solution of \eqref{eq:strong} is the normal derivative $\phi= \partial u/\partial \nu$ of the  solution $u\in H^1(\Omega)$ of \eqref{eq:Laplace}.
For more details, see e.g. \cite[Section 6.3 and 6.6]{steinbach}.

\subsection{Problem with generic singularity}
\label{subsec:singular sol}

As first example, we consider the Laplace-Dirichlet problem \eqref{eq:Laplace} on the pacman geometry
\begin{equation*}
\Omega:=\left\{r(\cos(\beta),\sin(\beta)):0\le r<\frac{1}{10}, \beta \in \left(-\frac{\pi}{2\tau},\frac{\pi}{2\tau}\right)\right\},
\end{equation*}
with $\tau=4/7$; see Figure \ref{fig:pacman}.
The geometry is parametrised on $[0,1]$ by a NURBS curve of degree $p=2$.
We prescribe the exact solution of \eqref{eq:Laplace} as 
\begin{equation*}
u(x,y)=r^{\tau}\cos\left(\tau\beta\right)
\end{equation*}
in polar coordinates $(x,y)=r(\cos \beta,\sin \beta)$.
We consider the corresponding integral equation \eqref{eq:Symmy}.
The normal derivative $\phi=\partial u/\partial \nu$ of $u$ reads 
\begin{equation*}
\phi(x,y)=\begin{pmatrix} \cos(\beta)\cos\left(\tau\beta\right)+\sin(\beta)\sin\left(\tau\beta\right)\\ \sin(\beta)\cos\left(\tau\beta\right)-\cos(\beta)\sin\left(\tau\beta\right)\end{pmatrix}\cdot \nu(x,y) \cdot \tau \cdot r^{\tau-1}
\end{equation*}
and has a generic singularity at the origin.

\begin{figure}[h!] 
\psfrag{pacman (Section 5.3)}[c][c]{}
\psfrag{45}[r][r]{\tiny $45^\circ$}
\begin{center}
\includegraphics[width=0.28\textwidth]{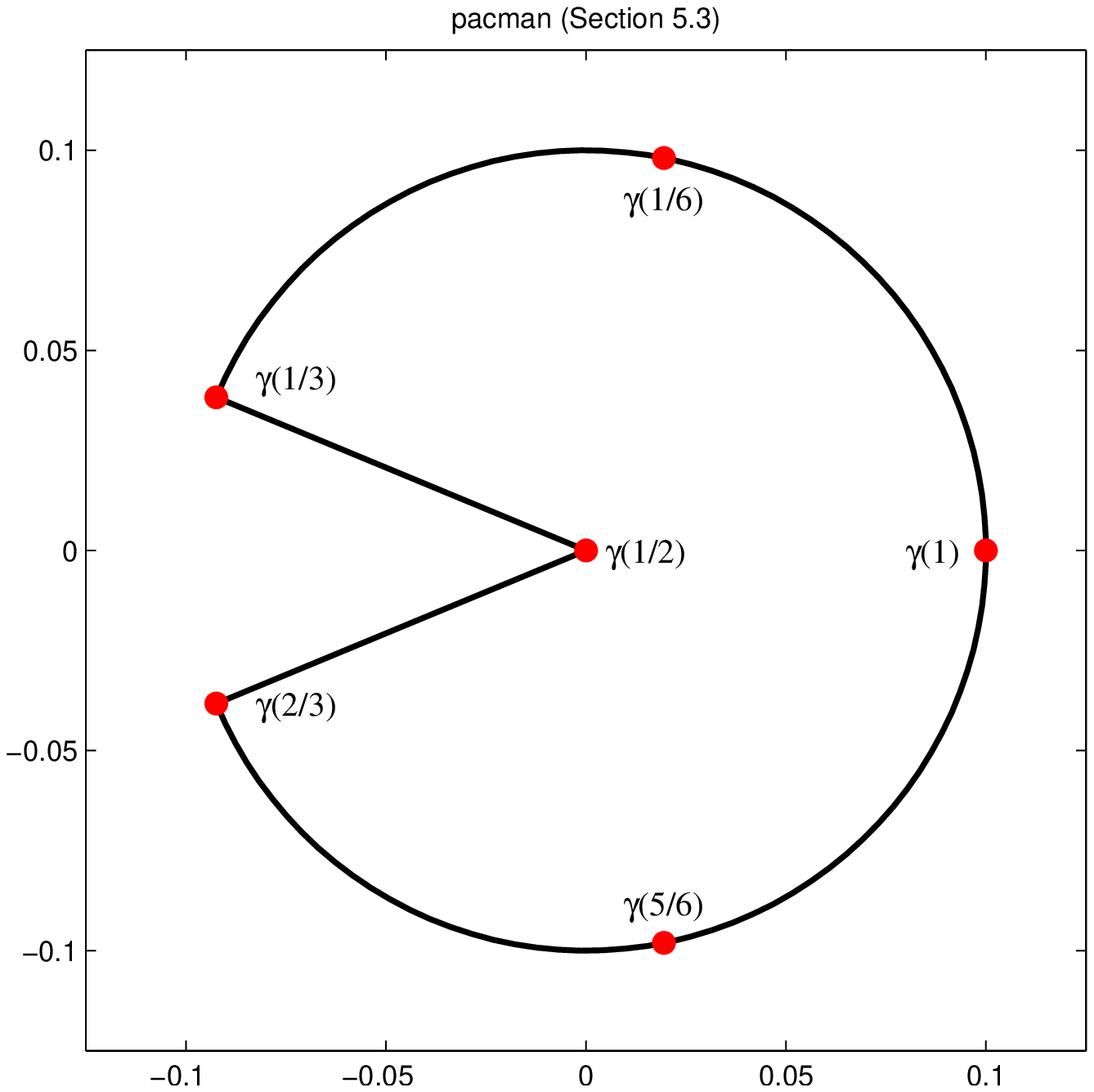}
\end{center}
\caption{Geometry and initial nodes for the experiment from Section~\ref{subsec:singular sol}.} 
\label{fig:pacman}
\end{figure}

\begin{figure}[h!] 
\psfrag{parameter domain}[c][c]{\tiny parameter domain}
\psfrag{solution}[c][c]{\tiny solution} 
\begin{center}
\includegraphics[width=0.28\textwidth]{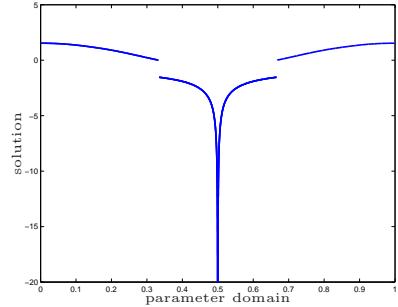}\end{center}
\caption{Experiment with singular solution on pacman geometry from Section \ref{subsec:singular sol}. The singular solution $\phi\circ{\gamma}$ is plotted on the parameter interval, where $0.5$ corresponds to the origin, where $\phi$ is singular.}
\label{fig:pacman solution}
\end{figure}

\begin{figure}[h!]
\psfrag{error and estimator}[c][c]{\tiny error and estimator}
\psfrag{number of knots N}[c][c]{\tiny number of knots $N$}
\psfrag{O(72)}[r][r]{\tiny$\mathcal{O}(N^{-7/2})$}
\psfrag{O(47)}[l][l]{\tiny$\mathcal{O}(N^{-4/7})$}
\psfrag{eta, unif.}[l][l]{\tiny $\eta_h$, unif.}
\psfrag{mu, unif.}[l][l]{\tiny $\mu_h$, unif.}
\psfrag{error, unif.}[l][l]{\tiny  error, unif.}
\psfrag{eta, eta-ad.}[l][l]{\tiny $\eta_h$, $\eta_h$-ad.}
\psfrag{error, eta-ad.}[l][l]{\tiny  error, $\eta_h$-ad.}
\psfrag{mu, mu-ad}[l][l]{\tiny $\mu_h$, $\mu_h$-ad.}
\psfrag{error, mu-ad.}[l][l]{\tiny  error, $\mu_h$-ad.}
\psfrag{Gal1}[c][c]{\tiny Galerkin}
\psfrag{Gal0}[c][c]{\tiny collocation}
\begin{center}
\includegraphics[width=0.35\textwidth]{geo_14IGA_1Gal_0.eps}\vspace{2.5mm}
\includegraphics[width=0.35\textwidth]{geo_14IGA_1Gal_1.eps}
\end{center}\caption{Experiment with singular solution on pacman geometry from Section \ref{subsec:singular sol}. Error and estimator are plotted versus the number of knots~$N$.
Uniform, $\eta_h$-driven and $\mu_h$-driven refinement is considered.} 
\label{fig:pacman convergence}
\end{figure}

\begin{figure}[h!]
\psfrag{ratio}[c][c]{\tiny efficiency index}
\psfrag{number of knots N}[c][c]{\tiny number of knots $N$}
\psfrag{rho=eta, unif.}[l][l]{\tiny $\rho_h=\eta_h$, unif.}
\psfrag{rho=mu, unif.}[l][l]{\tiny $\rho_h=\mu_h$, unif.}
\psfrag{rho=eta, ad.}[l][l]{\tiny $\rho_h=\eta_h$, ad.}
\psfrag{rho=mu, ad.}[l][l]{\tiny $\rho_h=\mu_h$, ad.}
\psfrag{Gal1}[c][c]{\tiny Galerkin}
\psfrag{Gal0}[c][c]{\tiny collocation}
\begin{center}
\includegraphics[width=0.35\textwidth]{geo_14collratios.eps}\vspace{2.5mm}
\includegraphics[width=0.35\textwidth]{geo_14Galratios.eps}
\end{center}
\caption{Experiment with singular solution on pacman geometry from Section \ref{subsec:singular sol}. 
The plot shows the efficiency indices  $\frac{\rho_h}{\enorm{\phi-\phi_h}}$ for the estimators $\rho_h\in \{\eta_h,\mu_h\}$, where adaptivity is driven by $\rho_h$.} 
\label{fig:pacman ratio}
\end{figure} 


\begin{figure}[h!]
\psfrag{Gal., rho=eta}{\tiny Gal., $\rho_h=\eta_h$}
\psfrag{Gal., rho=mu}{\tiny Gal., $\rho_h=\mu_h$}
\psfrag{coll., rho=eta}{\tiny coll., $\rho_h=\eta_h$}
\psfrag{coll., rho=mu}{\tiny coll., $\rho_h=\mu_h$}
\psfrag{error}[c][c]{\tiny error}
\psfrag{number of knots N}[c][c]{\tiny number of knots $N$}
\psfrag{O(72)}[c][c]{\tiny$\mathcal{O}(N^{-7/2})$}
\psfrag{O(47)}[l][l]{\tiny$\mathcal{O}(N^{-4/7})$}
\begin{center}
\includegraphics[width=0.35\textwidth]{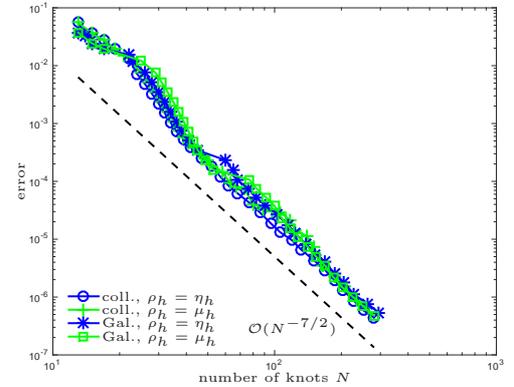}
\end{center}
\caption{Experiment with singular solution on pacman geometry from Section \ref{subsec:singular sol}. The errors from all presented adaptive IGABEM strategies are plotted versus the number of knots~$N$.} 
\label{fig:pacman errors}
\end{figure}

\begin{figure}[h!]
\psfrag{parameter domain}[c][c]{\tiny parameter domain}
\psfrag{Cars}[l][l]{\tiny $\eta$}
\psfrag{Faer}[l][l]{\tiny $\mu$}
\psfrag{error}[l][l]{\tiny  error}
\psfrag{Gal1est2}[c][c]{\tiny Galerkin with $\rho=\mu$}
\psfrag{Gal0est2}[c][c]{}
\psfrag{Gal1est1}[c][c]{\tiny Galerkin with $\rho=\eta$}
\psfrag{Gal0est1}[c][c]{\tiny collocation with $\rho=\eta$}
\begin{center}
\includegraphics[width=0.3\textwidth]{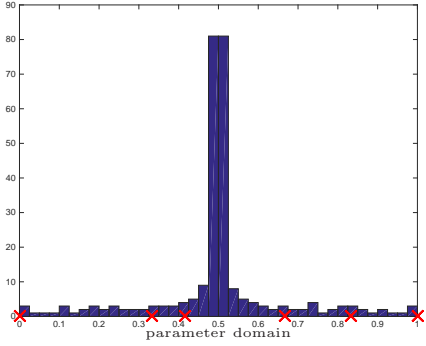}
\end{center}
\caption{Experiment with singular solution on pacman geometry from Section \ref{subsec:singular sol}.
Histogram of number of knots over the parameter domain. Knots with maximal multiplicity $p+1=3$ are marked.}
\label{fig:pacman knots}
\end{figure}

\begin{figure}[h!]
\psfrag{Gal., pp, unif.}{\tiny pp, unif. Gal.}
\psfrag{Gal., pp, ad.}{\tiny pp, ad. Gal.}
\psfrag{Gal., IGA, ad.}{\tiny IGA, ad. Gal.}
\psfrag{Gal., IGA, unif.}{\tiny IGA, unif. Gal}
\psfrag{coll., IGA, ad.}{\tiny IGA, ad. coll.}
\psfrag{coll., IGA, unif.}{\tiny IGA, unif. coll.}
\psfrag{error}[c][c]{\tiny error}
\psfrag{number of knots N}[c][c]{\tiny number of knots $N$}
\psfrag{O(72)}[c][c]{\tiny$\mathcal{O}(N^{-7/2})$}
\psfrag{O(47)}[l][l]{\tiny$\mathcal{O}(N^{-4/7})$}
\begin{center}
\includegraphics[width=0.35\textwidth]{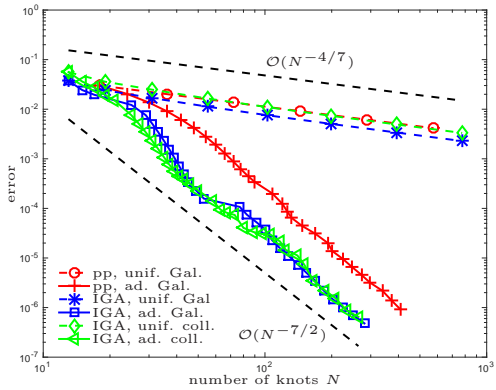}
\end{center}
\caption{Experiment with singular solution on pacman geometry from Section \ref{subsec:singular sol}. The errors from uniform/adaptive BEM with discontinuous piecewise polynomials and uniform/adaptive IGABEM are plotted versus the number of knots~$N$.} 
\label{fig:pacman compare}
\end{figure}

In Figure \ref{fig:pacman solution}, the solution $\phi$ is plotted over the parameter domain.
The singularity is located at $t=1/2$ and two jumps are located at $t=1/3$ rep. $t=2/3$.

  In Figure \ref{fig:pacman convergence}, 
  error and error estimators are plotted.
All values are plotted in a double logarithmic scale such that the experimental convergence rates are visible as the slope of the corresponding curves.
Since the solution lacks regularity, uniform refinement leads to the suboptimal rate $\mathcal{O}(N^{-4/7})$ for the energy error, whereas adaptive refinement leads to the optimal rate $\mathcal{O}(N^{-7/2})$.
In each case, the curves for the two different estimators $\eta_h$ and $\mu_h$ and the error are parallel.
In Figure \ref{fig:pacman ratio}, we plot the ratios $\eta_h/\enorm{\phi-\phi_h}$ resp. $\mu_h/\enorm{\phi-\phi_h}$.
Throughout, these ratios stay between $0.5$ and $2.7$ which underlines an accurate error estimation for both error estimators.
Figure \ref{fig:pacman errors} shows the errors of all considered adaptive IGABEM strategies.
We observe a very similar behaviour.

For adaptive refinement, Figure \ref{fig:pacman knots} provides  a histogram of the knots in $[a,b]$ of the last refinement step for collocation IGABEM with $\rho_h=\mu_h$, for the other adaptive strategies, the output looks similar (not displayed).
We see that the algorithm mainly refines the mesh around the singularity at $t=1/2$.
Additionally, the multiplicity at the jump points $t=1/3$ and $t=2/3$ appears to be maximal so that the discrete solution $\phi_h$  also mimics the discontinuities of the exact solution $\phi$.

In Figure \ref{fig:pacman compare}, we finally compare standard BEM with discontinuous piecewise polynomials against IGABEM.
For the error estimation we use the weighted-residual estimator $\mu_h$.
The output looks similar if $\eta_h$ is used instead (not displayed).
All approaches show similar convergence rates, however we clearly observe better multiplicative constants for Galerkin IGABEM and collocation IGABEM than for standard BEM.
\subsection{Adaptive IGABEM for problem with jump solution}
\label{subsec:jump sol}
As second example, we consider the Laplace-Dirichlet problem \eqref{eq:Laplace} on the square $\Omega=[0,1/2]^2$; see Figure \ref{fig:square}.
The geometry is parametrised on $[0,1]$ by a NURBS curve of degree $p=1$.

\begin{figure}[h!] 
\psfrag{square (Section 5.2)}[c][c]{}
\begin{center}
\includegraphics[width=0.25 \textwidth]{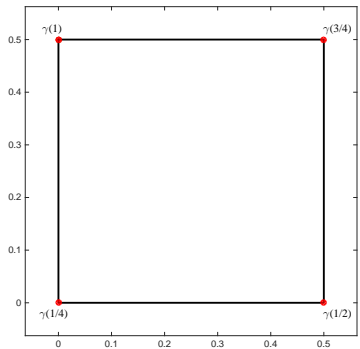}
\end{center}
\caption{Geometry and initial nodes for the experiments from Section~\ref{subsec:jump sol}.} 
\label{fig:square}
\end{figure}
We prescribe the exact solution of \eqref{eq:Laplace} as 
\begin{equation*}
u(x,y)=\sinh(2\pi x)\cos(2\pi y).
\end{equation*}
We consider the corresponding integral equation \eqref{eq:Symmy}.
The normal derivative $\phi=\partial u / \partial \nu$ of $u$ reads 
\begin{equation*}
\phi(x,y)=2\pi\begin{pmatrix} \cosh(2\pi x)\cos(2\pi y) \\ \sinh(2\pi x)\cos(2\pi y)\end{pmatrix}\cdot \nu(x,y).
\end{equation*}
It is smooth up to four jumps as can be seen in Figure~\ref{fig:square solution}.


  In Figure \ref{fig:square convergence} 
  we plot error and error estimators.
 The solution $\phi\circ\gamma$ has jumps at the points $t=1/4$, $t=1/2$, $t=3/4$ and $t=1$ resp. $t=0$.
As the knots $\check{\KK}_\gamma$ used for the parametrisation of $\Gamma$ all have multiplicity one, the functions of the isogeometric start approximation space are continuous at the points $t=1/4$, $t=1/2$ and $t=3/4$.
 Uniform refinement, where only $h$-refinement takes place, leads to the suboptimal rate $\mathcal{O}(N^{-1})$ for the energy error, whereas adaptive refinement increases the knot multiplicity at these problematic points and  leads again to the optimal rate $\mathcal{O}(N^{-5/2})$.
In Figure \ref{fig:square ratio}, we plot the efficiency indices $\eta_h/\enorm{\phi-\phi_h}$ resp. $\mu_h/\enorm{\phi-\phi_h}$. 
Throughout, these ratios stay between $0.1$ and $2.2$.
Figure~\ref{fig:square errors} shows the errors of all considered adaptive IGABEM strategies.
We observe that $\eta_h$ leads to slightly better results than $\mu_h$, while there appears to be almost no difference between Galerkin IGABEM and collocation IGABEM.


In Figure \ref{fig:square compare},  standard BEM with discontinuous piecewise polynomials is compared against IGABEM.
For adaptivity, we use the weighted-residual estimator $\mu_h$.
The output looks similar if the estimator $\eta_h$ is used  (not displayed).
We observe that in this example uniform standard BEM is superior to uniform IGABEM.
This is of course due to the fact that standard BEM uses ansatz spaces which are discontinuous at the jumps of $\phi$.
However, with the use of adaptive multiplicity increase this is fixed as can be seen in the convergence plot, where we again see that adaptive IGABEM leads to better results than adaptive standard BEM.
It is also interesting that adaptive standard BEM converges with a better multiplicative constant than uniform standard BEM. 
This is due to the fact that the solution is zero on $[1/4,1/2]$ and $[3/4,1]$, wherefore the adaptive algorithm uses only few elements in this area.

\begin{figure}[h] 
\psfrag{parameter domain}[c][c]{\tiny parameter domain}
\psfrag{solution}[c][c]{\tiny solution} 
\begin{center}\includegraphics[width=0.32\textwidth]{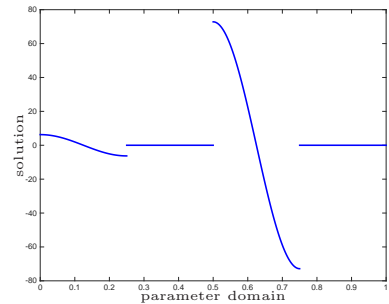}\end{center}
\caption{Experiment with jump solution on square from Section~\ref{subsec:jump sol}. The solution $\phi\circ{\gamma}$ is plotted on the parameter interval.}
\label{fig:square solution}
\end{figure}

\begin{figure}[h!]
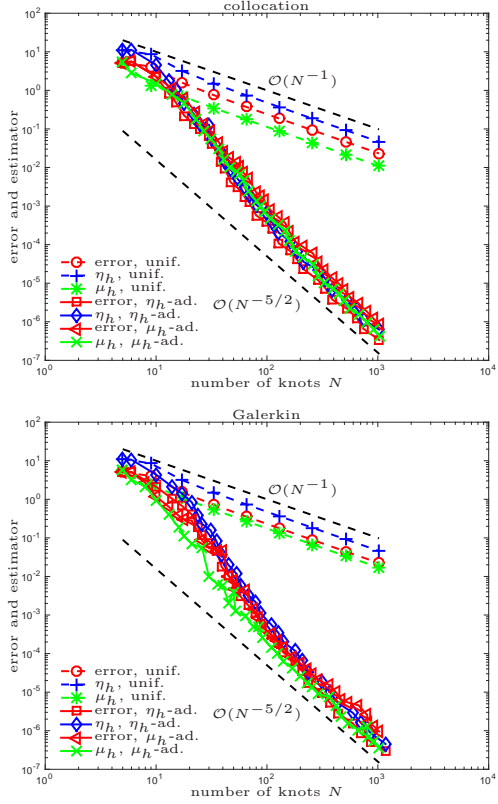

\psfrag{eta, unif.}[l][l]{\tiny $\eta_h$, unif.}
\psfrag{mu, unif.}[l][l]{\tiny $\mu_h$, unif.}
\psfrag{error, unif.}[l][l]{\tiny  error, unif.}
\psfrag{eta, eta-ad.}[l][l]{\tiny $\eta_h$, $\eta_h$-ad.}
\psfrag{error, eta-ad.}[l][l]{\tiny  error, $\eta_h$-ad.}
\psfrag{mu, mu-ad}[l][l]{\tiny $\mu_h$, $\mu_h$-ad.}
\psfrag{error, mu-ad.}[l][l]{\tiny  error, $\mu_h$-ad.}
\psfrag{Gal1}[c][c]{\tiny Galerkin}
\psfrag{Gal0}[c][c]{\tiny collocation}
\psfrag{error and estimator}[c][c]{\tiny error and estimator}
\psfrag{number of knots N}[c][c]{\tiny number of knots $N$}
\psfrag{O(52)}[r][r]{\tiny$\mathcal{O}(N^{-5/2})$}
\psfrag{O(1)}[l][l]{\tiny$\mathcal{O}(N^{-1})$}
\begin{center}\includegraphics[width=0.35\textwidth]{geo_5IGA_1Gal_0.eps}\vspace{2.5mm}
\includegraphics[width=0.35\textwidth]{geo_5IGA_1Gal_1.eps}\end{center}
\caption{Experiment with jump solution on square from Section~\ref{subsec:jump sol}. Error and estimator are plotted versus the number of knots~$N$.
Uniform, $\eta_h$-driven and $\mu_h$-driven refinement is considered.} 
\label{fig:square convergence}
\end{figure}

\begin{figure}[h!]
\psfrag{ratio}[c][c]{\tiny ratio}
\psfrag{number of knots N}[c][c]{\tiny number of knots $N$}
\psfrag{rho=eta, unif.}[l][l]{\tiny $\rho_h=\eta_h$, unif.}
\psfrag{rho=mu, unif.}[l][l]{\tiny $\rho_h=\mu_h$, unif.}
\psfrag{rho=eta, ad.}[l][l]{\tiny $\rho_h=\eta_h$, ad.}
\psfrag{rho=mu, ad.}[l][l]{\tiny $\rho_h=\mu_h$, ad.}
\psfrag{Gal1}[c][c]{\tiny Galerkin}
\psfrag{Gal0}[c][c]{\tiny collocation}
\begin{center}
\includegraphics[width=0.35\textwidth]{geo_5collratios.eps}\vspace{2.5mm}
\includegraphics[width=0.35\textwidth]{geo_5Galratios.eps}
\end{center}
\caption{Experiment with jump solution on square from Section~\ref{subsec:jump sol}. The plot shows the efficiency indices  $\frac{\rho_h}{\enorm{\phi-\phi_h}}$ for the estimators $\rho_h\in \{\eta_h,\mu_h\}$, where adaptivity is driven by $\rho_h$.} 
\label{fig:square ratio}
\end{figure} 


\begin{figure}[h!]
\psfrag{Gal., rho=eta}{\tiny Gal., $\rho_h=\eta_h$}
\psfrag{Gal., rho=mu}{\tiny Gal., $\rho_h=\mu_h$}
\psfrag{coll., rho=eta}{\tiny coll., $\rho_h=\eta_h$}
\psfrag{coll., rho=mu}{\tiny coll., $\rho_h=\mu_h$}
\psfrag{error}[c][c]{\tiny error}
\psfrag{number of knots N}[c][c]{\tiny number of knots $N$}
\psfrag{O(52)}[r][r]{\tiny$\mathcal{O}(N^{-5/2})$}
\psfrag{O(47)}[l][l]{\tiny$\mathcal{O}(N^{-4/7})$}
\begin{center}
\includegraphics[width=0.35\textwidth]{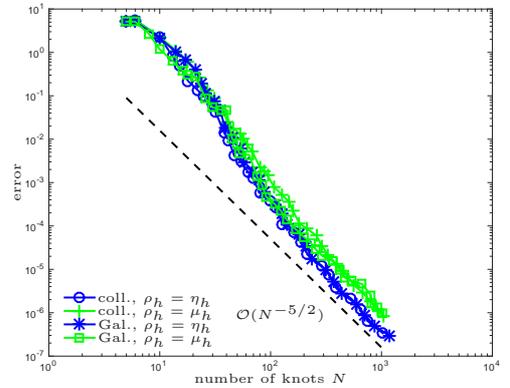}
\end{center}
\caption{Experiment with jump solution on square from Section~\ref{subsec:jump sol}. The errors from all presented adaptive IGABEM strategies are plotted versus the number of knots~$N$.} 
\label{fig:square errors}
\end{figure}

\begin{figure}[h!]
\psfrag{Gal., pp, unif.}{\tiny pp, unif. Gal.}
\psfrag{Gal., pp, ad.}{\tiny pp, ad. Gal.}
\psfrag{Gal., IGA, ad.}{\tiny IGA, ad. Gal.}
\psfrag{Gal., IGA, unif.}{\tiny IGA, unif. Gal}
\psfrag{coll., IGA, ad.}{\tiny IGA, ad. coll.}
\psfrag{coll., IGA, unif.}{\tiny IGA, unif. coll.}
\psfrag{error}[c][c]{\tiny error}
\psfrag{number of knots N}[c][c]{\tiny number of knots $N$}
\psfrag{O(52)}[r][r]{\tiny$\mathcal{O}(N^{-5/2})$}
\psfrag{O(1)}[l][l]{\tiny$\mathcal{O}(N^{-1})$}
\psfrag{Gal1est2}{\tiny Galerkin with $\rho=\mu$}
\psfrag{Gal0est2}{\tiny collocation with $\rho=\mu$}
\psfrag{Gal1est1}{\tiny Galerkin with $\rho=\eta$}
\psfrag{Gal0est1}{\tiny collocation with $\rho=\eta$}
\begin{center}
\includegraphics[width=0.35\textwidth]{geo_5_compare.eps}
\end{center}
\caption{Experiment with jump solution on square from Section~\ref{subsec:jump sol}. The errors from uniform BEM with discontinuous piecewise polynomials and uniform/adaptive IGABEM are plotted versus the number of knots~$N$.} 
\label{fig:square compare}
\end{figure} 

\subsection{Adaptive IGABEM for slit problem}
\label{subsec:slit problem}

As last example, we consider a crack problem on the slit $\Gamma= [-1,1]\times\{0\}$.
We parametrize $\Gamma$ by a NURBS curve of degree $p=1$. 
 For $f(x,0):=-x/2$ and the single-layer operator $V$, the exact solution of \eqref{eq:strong} reads
\begin{align*}
\phi(x,0)=\frac{-x}{\sqrt{1-x^2}}.
\end{align*}
Note that $\phi\in \H^{-\varepsilon}(\Gamma)\setminus L^2(\Gamma)$ for all $\varepsilon>0$ and that $\phi$ has singularities at the tips $x=\pm 1$.


\begin{figure}[h!] 
\psfrag{slit (Section 5.4)}[c][c]{}
\begin{center}
\includegraphics[width=0.28 \textwidth]{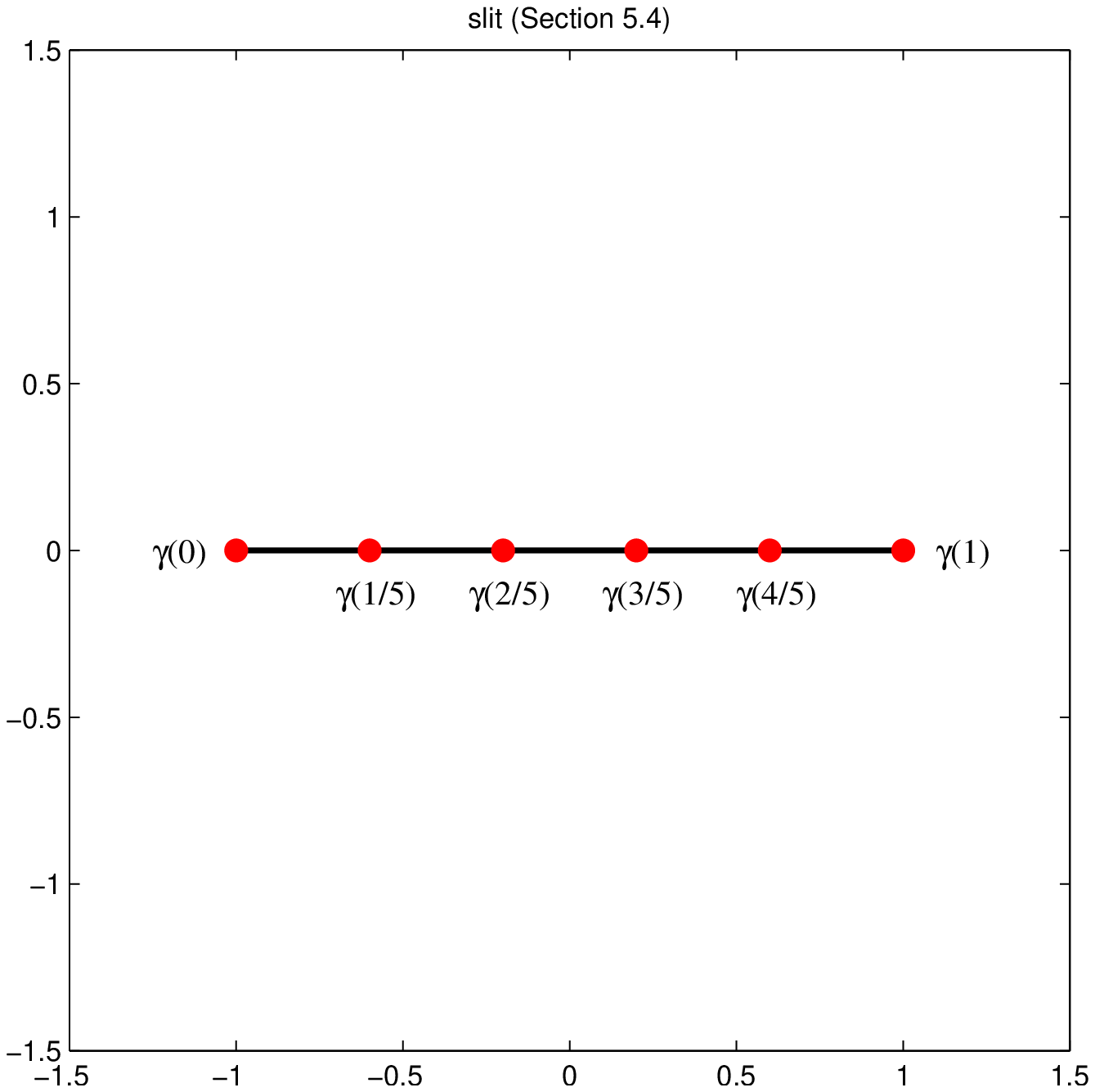}
\caption{Geometry and initial nodes for the experiment from Section~\ref{subsec:slit problem}.} 
\label{fig:geometries}
\end{center}
\end{figure}

\begin{figure}[h!]
\psfrag{error and estimator}[c][c]{\tiny error and estimator}
\psfrag{number of knots N}[c][c]{\tiny number of knots $N$}
\psfrag{O(52)}[r][r]{\tiny$\mathcal{O}(N^{-5/2})$}
\psfrag{O(12)}[l][l]{\tiny$\mathcal{O}(N^{-1/2})$}
\psfrag{estimator and error}{\tiny estimator and error}
\psfrag{number of knots N}{\tiny number of knots $N$}
\psfrag{eta, unif.}[l][l]{\tiny $\eta_h$, unif.}
\psfrag{mu, unif.}[l][l]{\tiny $\mu_h$, unif.}
\psfrag{error, unif.}[l][l]{\tiny  error, unif.}
\psfrag{eta, eta-ad.}[l][l]{\tiny $\eta_h$, $\eta_h$-ad.}
\psfrag{error, eta-ad.}[l][l]{\tiny  error, $\eta_h$-ad.}
\psfrag{mu, mu-ad}[l][l]{\tiny $\mu_h$, $\mu_h$-ad.}
\psfrag{error, mu-ad.}[l][l]{\tiny  error, $\mu_h$-ad.}
\psfrag{Gal1}[c][c]{\tiny Galerkin}
\psfrag{Gal0}[c][c]{\tiny collocation}
\begin{center}\includegraphics[width=0.35\textwidth]{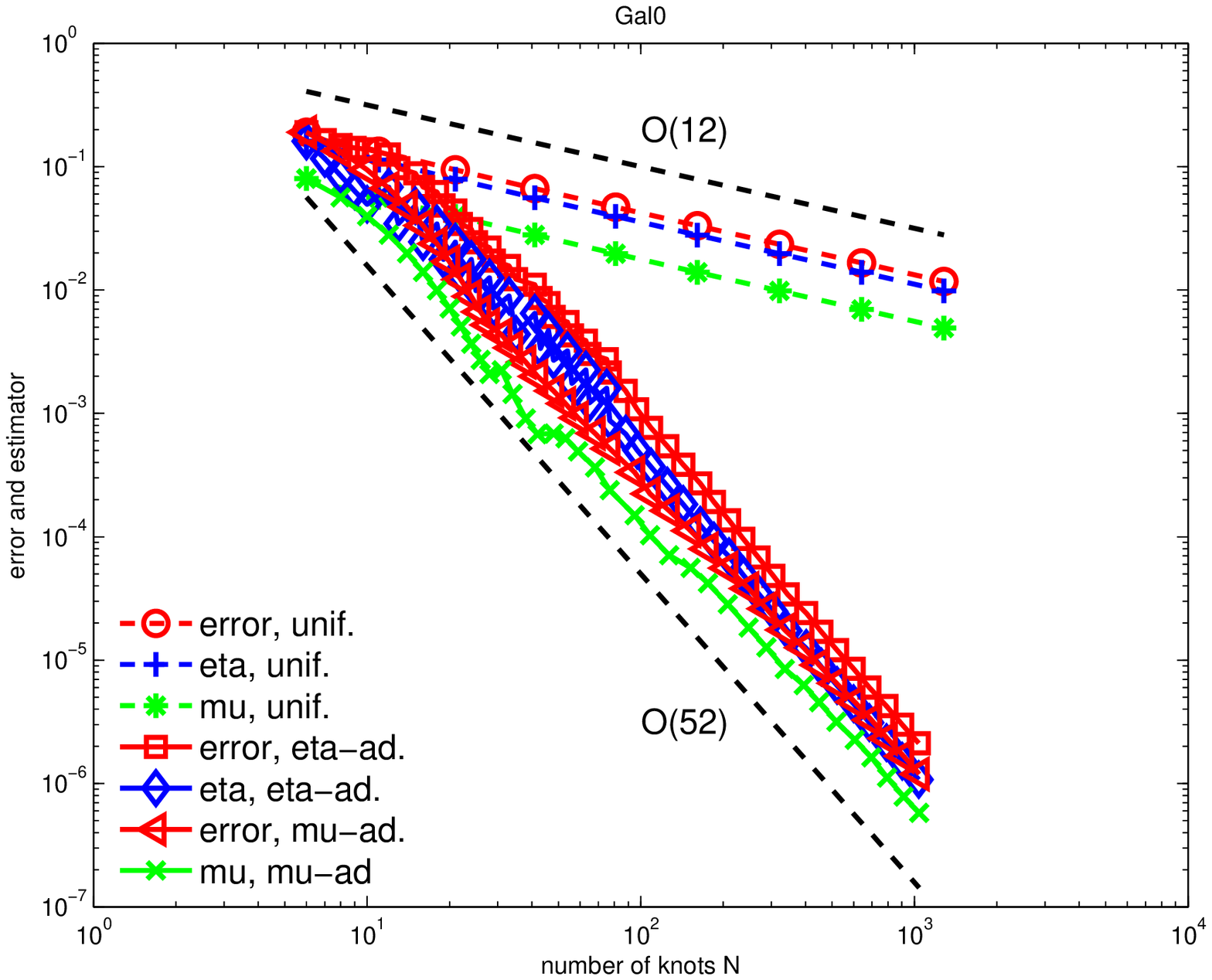}\vspace{2.5mm}
\includegraphics[width=0.35\textwidth]{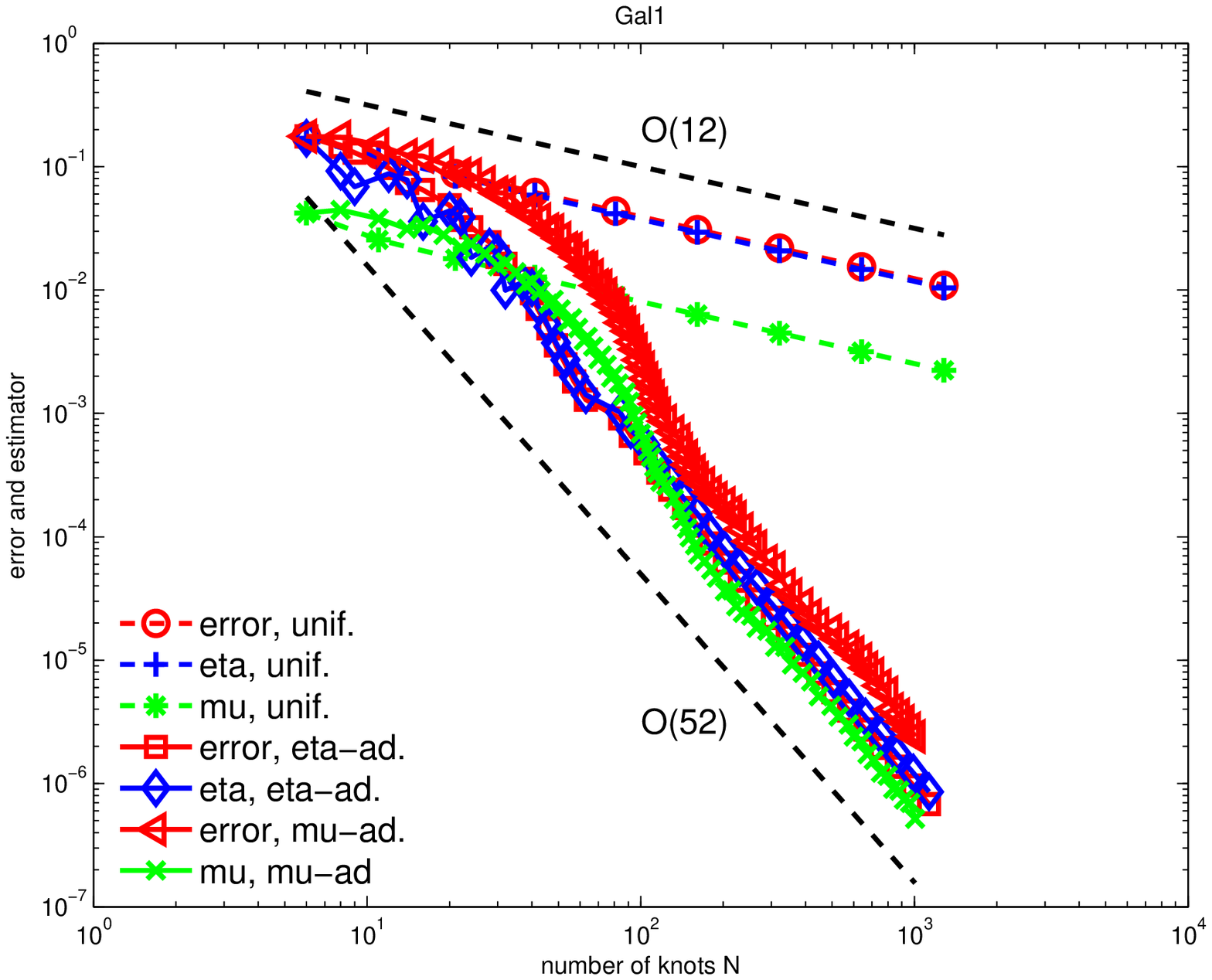}
\end{center}
\caption{Experiment with singular solution on slit from Section~\ref{subsec:slit problem}. Error and estimator are plotted versus the number of knots~$N$. Uniform, $\eta_h$-driven and $\mu_h$-driven refinement is considered.} 
\label{fig:slit convergence}
\end{figure} 

\begin{figure}[h!]
\psfrag{Gal., rho=eta}{\tiny Gal., $\rho_h=\eta_h$}
\psfrag{Gal., rho=mu}{\tiny Gal., $\rho_h=\mu_h$}
\psfrag{coll., rho=eta}{\tiny coll., $\rho_h=\eta_h$}
\psfrag{coll., rho=mu}{\tiny coll., $\rho_h=\mu_h$}
\psfrag{error}[c][c]{\tiny error}
\psfrag{number of knots N}[c][c]{\tiny number of knots $N$}
\psfrag{O(52)}[r][r]{\tiny$\mathcal{O}(N^{-5/2})$}
\psfrag{O(47)}[l][l]{\tiny$\mathcal{O}(N^{-4/7})$}
\begin{center}
\includegraphics[width=0.35\textwidth]{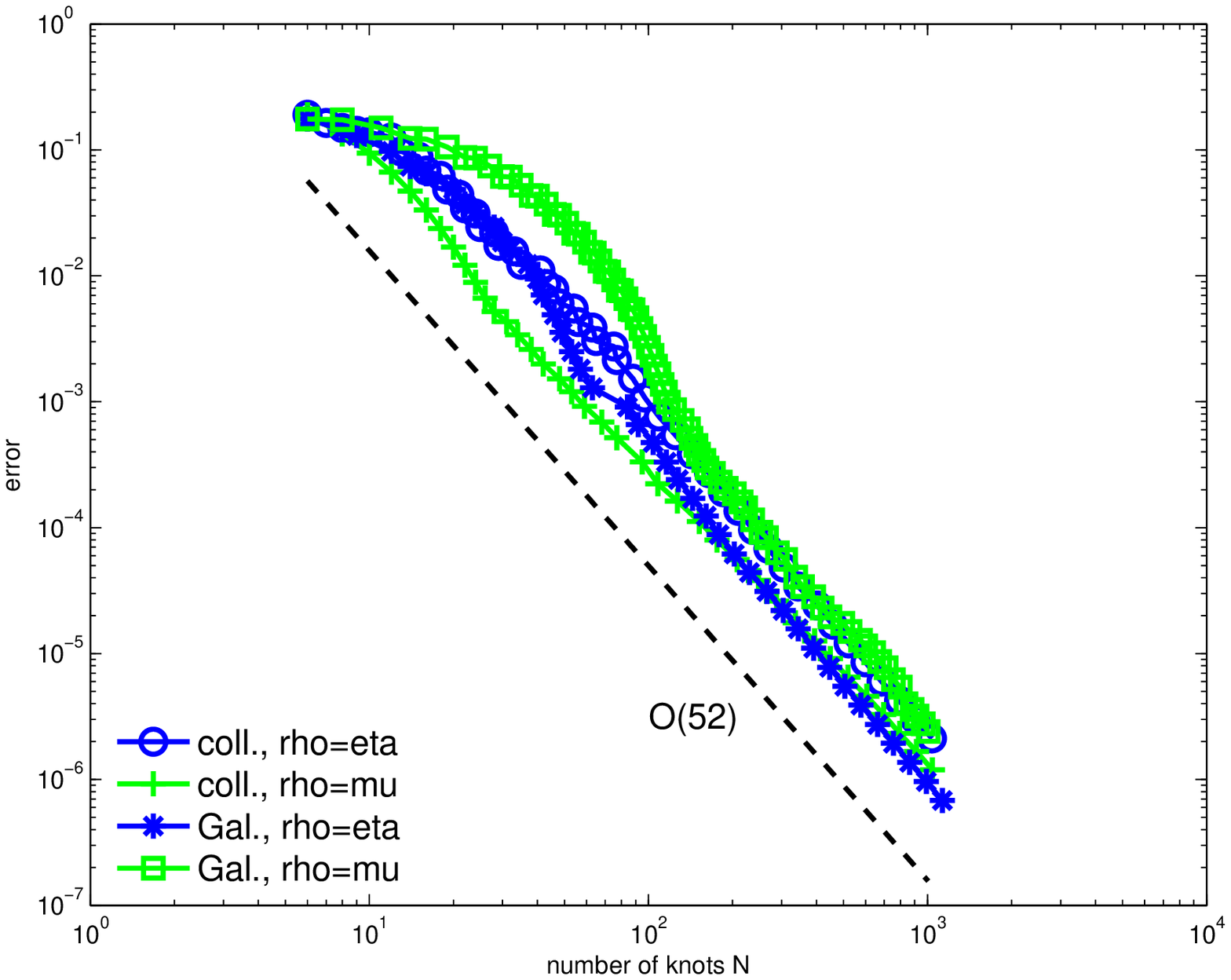}
\end{center}
\caption{Experiment with singular solution on slit from Section~\ref{subsec:slit problem}. The errors from all presented adaptive IGABEM strategies are plotted versus the number of knots~$N$.} 
\label{fig:slit errors}
\end{figure}

\begin{figure}[h!]
\psfrag{ratio}[c][c]{\tiny ratio}
\psfrag{number of knots N}[c][c]{\tiny number of knots $N$}
\psfrag{rho=eta, unif.}[l][l]{\tiny $\rho_h=\eta_h$, unif.}
\psfrag{rho=mu, unif.}[l][l]{\tiny $\rho_h=\mu_h$, unif.}
\psfrag{rho=eta, ad.}[l][l]{\tiny $\rho_h=\eta_h$, ad.}
\psfrag{rho=mu, ad.}[l][l]{\tiny $\rho_h=\mu_h$, ad.}
\psfrag{Gal1}[c][c]{\tiny Galerkin}
\psfrag{Gal0}[c][c]{\tiny collocation}
\begin{center}
\includegraphics[width=0.35\textwidth]{geo_18collratios.eps}\vspace{2.5mm}
\includegraphics[width=0.35\textwidth]{geo_18Galratios.eps}
\end{center}
\caption{Experiment with singular solution on slit from Section~\ref{subsec:slit problem}. The plot shows the efficiency indices  $\frac{\rho_h}{\enorm{\phi-\phi_h}}$ for the estimators $\rho_h\in \{\eta_h,\mu_h\}$, where adaptivity is driven by $\rho_h$.} 
\label{fig:slit ratio}
\end{figure} 

%

\begin{figure}[h!]
\psfrag{Gal., pp, unif.}{\tiny pp, unif. Gal.}
\psfrag{Gal., pp, ad.}{\tiny pp, ad. Gal.}
\psfrag{Gal., IGA, ad.}{\tiny IGA, ad. Gal.}
\psfrag{Gal., IGA, unif.}{\tiny IGA, unif. Gal}
\psfrag{coll., IGA, ad.}{\tiny IGA, ad. coll.}
\psfrag{coll., IGA, unif.}{\tiny IGA, unif. coll.}
\psfrag{error}[c][c]{\tiny error}
\psfrag{number of knots N}[c][c]{\tiny number of knots $N$}
\psfrag{O(52)}[c][l]{\tiny$\mathcal{O}(N^{-5/2})$}
\psfrag{O(12)}[l][l]{\tiny$\mathcal{O}(N^{-1/2})$}
\psfrag{Gal1est2}{\tiny Galerkin with $\rho=\mu$}
\psfrag{Gal0est2}{\tiny collocation with $\rho=\mu$}
\psfrag{Gal1est1}{\tiny Galerkin with $\rho=\eta$}
\psfrag{Gal0est1}{\tiny collocation with $\rho=\eta$}
\begin{center}
\includegraphics[width=0.35\textwidth]{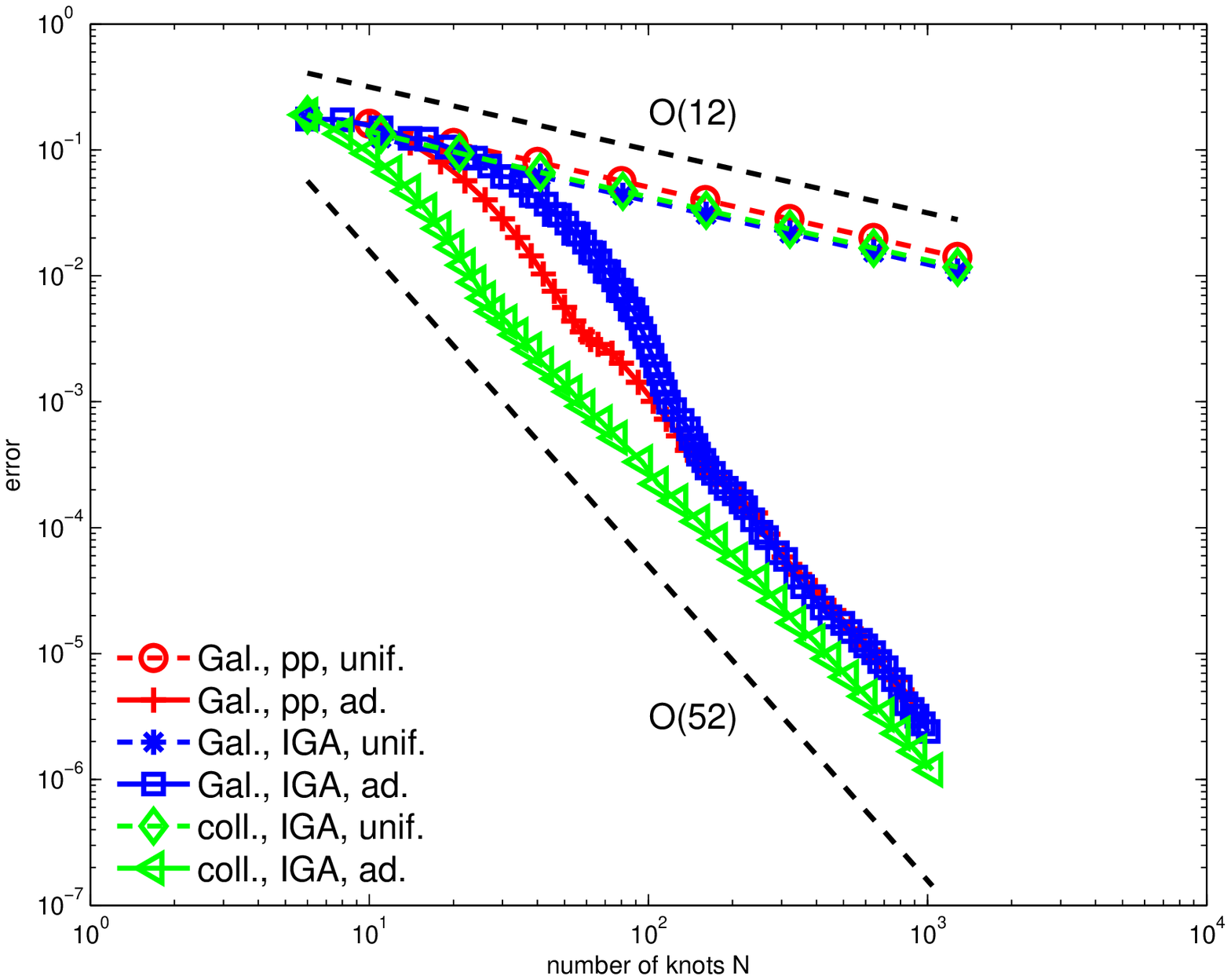}
\end{center}
\caption{Experiment with singular solution on slit from Section~\ref{subsec:slit problem}. The errors from uniform BEM with discontinuous piecewise polynomials and uniform/adaptive IGABEM are plotted versus the number of knots~$N$.} 
\label{fig:slit compare}
\end{figure}

In Figure \ref{fig:slit convergence},   error and error estimators for the uniform and  for the adaptive approach are plotted.
The error is obtained via \eqref{eq:error calc gal} resp. \eqref{eq:error calc col}, where $\enorm{\phi}^2=\pi/4$ is computed analytically.
Since the solution lacks regularity, uniform refinement leads to the suboptimal rate $\mathcal{O}(N^{-1/2})$, whereas adaptive refinement leads to the optimal rate $\mathcal{O}(N^{-5/2})$.
The curves for the two estimators and the error are again parallel.
In Figure \ref{fig:slit ratio}, we plot 
the efficiency indices $\eta_h/\enorm{\phi-\phi_h}$ resp. $\mu_h/\enorm{\phi-\phi_h}$.
Figure~\ref{fig:slit errors} shows the errors of all considered adaptive IGABEM strategies.
Here, $\eta_h$-adaptive Galerkin IGABEM and $\mu_h$-adaptive collocation IGABEM lead to the best results.
In Figure \ref{fig:slit compare} we compare standard BEM  against IGABEM, where we use $\rho_h=\mu_h$.
While adaptve Galerkin IGABEM and adaptive standard BEM lead to optimal convergence rates, the best results are achieved with adaptive collocation IGABEM.



\section{A~posteriori error estimation for IGABEM}
\label{section:aposteriori}


\subsection{Main results}\label{sec:main theorem}
For $T\in\TT_h$, we inductively define the patch $\omega_h^m(T)\subseteq \Gamma$ of order $m\in\N_0$ by
\begin{align}\label{eq:patch}
\begin{split}
 \omega_h^0(T) &:= T, \\
 \omega_h^{m+1}(T) &:= \bigcup\set{T'\in\TT_h}{T'\cap\omega_h^m(T)\neq\emptyset}
\end{split}
\end{align}
The main result of {Theorem \ref{thm:faermann}} requires the following two assumptions on $\TT_h$ and $\XX_h$
for some fixed integer $m\in\N_0$:
\begin{itemize}
\item[(A1)] For each $T\in\TT_h$, there exists some fixed function $\psi_T\in\XX_h$ with 
connected support $\supp(\psi_T)$ such that
\begin{align}\label{eq:phiT}
 T \subseteq \supp(\psi_T) \subseteq \omega_h^m(T).
\end{align}
\item[(A2)] There exists some constant $q\in (0,1]$ such that
\begin{align}\label{eq:contraction}
 \norm{1-\psi_T}{L^2(\supp(\psi_T))}^2 \le (1-q)\, |\supp(\psi_T)|
\end{align}
for all $T\in\TT_h$.
\end{itemize}
The first theorem shows that 
these assumptions are, in particular, satisfied for NURBS spaces.

\begin{theorem}[{\cite[Theorem 4.4]{fgp14}}]
\label{thm:NURBS satisfy Assumption} 
For $p\in \N_0$ and $m:=\lceil p/2\rceil$, the space   $\XX_h:=\widehat{\mathscr{N}}^p(\check{\mathcal{K}}_{h},\mathcal{W}_{h})$ 
satisfies the assumptions {\rm (A1)--(A2)}. The constant $0<q\le1$ depends only on $\kappa( \check{\mathcal{T}}_h)$, $\min(\mathcal{W}_h)$, $\max(\mathcal{W}_h)$, $p$, and $\gamma$.\qed
\end{theorem}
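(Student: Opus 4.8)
The plan is to verify (A1) and (A2) separately with $m:=\lceil p/2\rceil$, taking for each $T=\gamma(\check T_j)\in\TT_h$ (with associated element $\check T_j=[\check z_{j-1},\check z_j]$ in the parameter domain) the function $\psi_T:=R_{i,p}\circ\gamma^{-1}$ to be a \emph{single} NURBS basis function with a carefully chosen index~$i$. Throughout I would rely on the basic facts from Lemma~\ref{lem:properties for B-splines}: $\supp R_{i,p}=\supp B_{i,p}=[t_{i-1},t_{i+p}]$ meets at most $p+1$ elements of $\check{\mathcal{T}}_h$, $R_{i,p}$ is positive on the open support, $B_{i,p}$ is determined by the $p+2$ knots $t_{i-1},\dots,t_{i+p}$, and the B-splines of degree $p$ form a nonnegative partition of unity (hence $B_{i,p}\le1$ and $\sum_iR_{i,p}=1$, so that $0\le R_{i,p}\le1$).

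For (A1): for an element $\check T_j$ away from the endpoints, I would pick $i$ so that $\check T_j$ sits in the middle of the $\le p+1$ knot intervals comprising $\supp B_{i,p}$; the support then extends by at most $\lceil p/2\rceil=m$ elements to either side of $\check T_j$, and, $\gamma$ being continuous, $T\subseteq\supp(\psi_T)\subseteq\omega_h^m(T)$ with $\supp(\psi_T)$ connected. For $\check T_j$ near an endpoint $a$ or $b$ (relevant only for $\Gamma\subsetneqq\partial\Omega$), the multiplicities $\#a=\#b=p+1$ collapse the outermost $p+1$ knots, so choosing $i$ with the support boundary-aligned makes $\supp(\psi_T)$ consist of only the first (resp.\ last) $\le m+1$ elements, which is again contained in $\omega_h^m(T)$. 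For $\Gamma=\partial\Omega$, periodicity removes the endpoint cases, and $h_T\le|\Gamma|/4$ together with shape regularity handles the closed geometry.

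For (A2): the first step is to reduce it to a pointwise lower bound. Since $0\le\psi_T\le1$, if one can exhibit a measurable $E\subseteq\supp(\psi_T)$ and constants $c_0,c_1\in(0,1]$ with $\psi_T\ge c_0$ on $E$ and $|E|\ge c_1\,|\supp(\psi_T)|$, then
\begin{align*}
 \norm{1-\psi_T}{L^2(\supp(\psi_T))}^2
 &\le(1-c_0)^2|E|+\bigl(|\supp(\psi_T)|-|E|\bigr)\\
 &\le\bigl(1-c_1(2c_0-c_0^2)\bigr)\,|\supp(\psi_T)|,
\end{align*}
so (A2) holds with $q:=c_1(2c_0-c_0^2)\in(0,1]$. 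Next, since $\sum_\ell w_\ell B_{\ell,p}\le\max\mathcal{W}_h$ and $w_i\ge\min\mathcal{W}_h$, one has $R_{i,p}\ge\tfrac{\min\mathcal{W}_h}{\max\mathcal{W}_h}B_{i,p}$, so it remains to bound $B_{i,p}$ from below on a set of measure $\gtrsim|\supp B_{i,p}|$. Here I would exploit the affine invariance of B-splines: after rescaling $[t_{i-1},t_{i+p}]$ to $[0,1]$, the $p+2$ relevant knots range — thanks to $\kappa(\check{\mathcal{T}}_h)$ and $p$ — over a compact family of normalised knot vectors whose nonempty subintervals all have length $\ge\delta$ for some $\delta=\delta(\kappa(\check{\mathcal{T}}_h),p)>0$. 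On this compact set the map (knot vector)$\mapsto B_{i,p}$ is continuous and, by interior positivity, $\max B_{i,p}\ge\varepsilon$ for a uniform $\varepsilon=\varepsilon(\kappa(\check{\mathcal{T}}_h),p)>0$; applying a Markov inequality on the (length $\ge\delta$) knot interval carrying this maximum and using $B_{i,p}\le1$ then gives $B_{i,p}\ge\varepsilon/2$ on a subinterval of length $\gtrsim\varepsilon\delta/p^2$. Undoing the scaling and recalling $\kappa(\TT_h)\simeq\kappa(\check{\mathcal{T}}_h)$ yields $c_0,c_1$, hence $q$, depending only on $\kappa(\check{\mathcal{T}}_h)$, $\min\mathcal{W}_h$, $\max\mathcal{W}_h$, $p$, and~$\gamma$.

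I expect this last step to be the main obstacle: turning the B-spline lower bound into a genuinely quantitative and uniform statement. One has to check that the normalised knot vectors really do form a compact set (this is exactly where shape regularity enters), deal with degenerate, i.e.\ coincident, knot configurations in the compactness argument, and make sure that no hidden constant depends on $\TT_h$ or $\XX_h$ beyond the listed quantities. By comparison, (A1) is mostly bookkeeping once the correct index $i$ has been identified, although the endpoint and high-multiplicity cases still require some care.
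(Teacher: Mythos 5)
This theorem is quoted from \cite[Theorem~4.4]{fgp14} and the present paper contains no proof of it, so there is nothing in the source to compare your argument against line by line; I can only assess your proposal on its own terms. Your overall strategy is the natural one and, as far as I can tell, the same one underlying the cited result: take $\psi_T$ to be a single NURBS basis function $R_{i,p}\circ\gamma^{-1}$ whose support is centred on $\check T_j$ (boundary-aligned near $a,b$ where $\#a=\#b=p+1$), count knot intervals to get $\supp(\psi_T)\subseteq\omega_h^{\lceil p/2\rceil}(T)$ for (A1), and reduce (A2) to a uniform pointwise lower bound $\psi_T\ge c_0$ on a subset $E$ with $|E|\ge c_1|\supp(\psi_T)|$, using $0\le R_{i,p}\le1$ and $R_{i,p}\ge\frac{\min\mathcal{W}_h}{\max\mathcal{W}_h}B_{i,p}$. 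The elementary computation you give for turning $(c_0,c_1)$ into $q=c_1(2c_0-c_0^2)$ is correct, and the bookkeeping for (A1), including the endpoint cases and the periodic case under \eqref{eq:h small}, is sound.

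The one genuine soft spot is exactly the step you flag yourself: the compactness argument for the lower bound on $B_{i,p}$. The map from (normalised) knot vectors to B-splines is \emph{not} continuous in the sup-norm at configurations where knots coalesce (multiplicities jump, and with them the continuity class of $B_{i,p}$), so "continuity on a compact family of knot vectors" does not come for free and would need a careful choice of topology; moreover the Markov-inequality step presumes polynomial structure on a single knot interval of controlled length, which again needs the degenerate configurations handled separately. All of this can be bypassed: the standard identity $\int_{t_{i-1}}^{t_{i+p}}B_{i,p}\,dt=(t_{i+p}-t_{i-1})/(p+1)$ holds for arbitrary knot multiplicities, and together with $0\le B_{i,p}\le1$ a Chebyshev-type argument gives $|\{B_{i,p}\ge\tfrac{1}{2(p+1)}\}|\ge\tfrac{1}{2(p+1)}|\supp B_{i,p}|$, i.e.\ $c_0=c_1=\tfrac{1}{2(p+1)}$ explicitly, with no compactness and no shape-regularity input at this stage; the dependence on $\gamma$ (through the bi-Lipschitz constant when transferring lengths and $L^2$-norms from the parameter domain to $\Gamma$) and on $\min\mathcal{W}_h,\max\mathcal{W}_h$ then enters exactly as the theorem states. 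With that replacement your proof goes through and is fully quantitative.
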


The main result of \cite{fgp14} reads as follows:

\begin{theorem}[{\cite[Theorem 3.1]{fgp14}}]
\label{thm:faermann}
For any approximation $\phi_h\in L^2(\Gamma)$, the residual $r_h=f-V\phi_h$ satisfies {the efficiency estimate}
\begin{align}\label{eq1:thm:faermann}
 \eta_h:=\Big(\sum_{{z}\in\NN_h}\eta_h(z)^2\Big)^{1/2}
 \le \Ceff\,\enorm{\phi-\phi_h}
\end{align}
with $\eta_h(z):=|r_h|_{H^{1/2}(\omega_h({z}))}$.
If the mesh $\TT_h$ and the discrete space $\XX_h$ satisfy assumptions~{\rm(A1)--(A2)}, the Galerkin solution $\phi_h\in\XX_h$ of~\eqref{eq:discrete} also satisfies the reliability estimate
\begin{align}\label{eq2:thm:faermann}
\enorm{\phi-\phi_h}
\le \Crel\,\eta_h.
\end{align}
The constant $\Ceff>0$ depends only on $\Gamma_{}$, while $\Crel>0$ additionally depends on $m$, $\kappa(\TT_h)$, and $q$.\qed
\end{theorem}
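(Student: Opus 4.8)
The plan is to prove the efficiency bound~\eqref{eq1:thm:faermann} and the reliability bound~\eqref{eq2:thm:faermann} separately: the former holds for \emph{any} $\phi_h\in L^2(\Gamma)$ and uses only the mapping properties of $V$, whereas the latter additionally exploits Galerkin orthogonality together with {\rm(A1)--(A2)}.

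For efficiency, I would start from $r_h=f-V\phi_h=V(\phi-\phi_h)$, which holds because $V\phi=f$. Each element of $\TT_h$ lies in exactly the two node patches attached to its endpoints, so the family $\{\omega_h(z)\}_{z\in\NN_h}$ has bounded overlap; splitting the Slobodeckij double integral according to whether the two points lie in the same element, in neighbouring elements, or farther apart, one gets $\eta_h^2\le C\,|r_h|_{H^{1/2}(\Gamma)}^2\le C\,\norm{r_h}{H^{1/2}(\Gamma)}^2$ with $C$ depending only on the overlap. Combined with the continuity of $V\colon\H^{-1/2}(\Gamma)\to H^{1/2}(\Gamma)$ and the norm equivalence $\enorm{\cdot}\simeq\norm{\cdot}{\H^{-1/2}(\Gamma)}$, this gives~\eqref{eq1:thm:faermann} with a constant $\Ceff$ depending only on $\Gamma$. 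No orthogonality enters, which is exactly why the estimate is available for collocation as well.

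For reliability, I would again use $r_h=V(\phi-\phi_h)$: then $\enorm{\phi-\phi_h}^2=\dual{r_h}{\phi-\phi_h}_\Gamma$, and by the $H^{1/2}\times\H^{-1/2}$ duality together with $\enorm{\cdot}\simeq\norm{\cdot}{\H^{-1/2}(\Gamma)}$ this is $\lesssim\norm{r_h}{H^{1/2}(\Gamma)}\,\enorm{\phi-\phi_h}$, so the claim reduces to the localization estimate $\norm{r_h}{H^{1/2}(\Gamma)}\lesssim\eta_h$. The structural input is that, by {\rm(A1)}, the Galerkin solution satisfies $\dual{r_h}{\psi_T}_\Gamma=0$ for every $T\in\TT_h$; testing with $1-\psi_T$ and invoking {\rm(A2)} yields, after subtracting the mean value $\langle r_h\rangle$ of $r_h$ over $\supp(\psi_T)$, a local Poincar\'e-type inequality
\begin{align*}
 \norm{r_h}{L^2(\supp(\psi_T))}^2\le q^{-1}\,\norm{r_h-\langle r_h\rangle}{L^2(\supp(\psi_T))}^2 .
\end{align*}
I would then split the double integral defining $|r_h|_{H^{1/2}(\Gamma)}^2$ into a near part---two points in a common patch $\omega_h(z)$, bounded directly by $\eta_h^2$---and a far part, and for the far part insert a \emph{continuous} Cl\'ement-type quasi-interpolant $\Pi_h r_h$ onto a fixed continuous spline/polynomial space subordinate to $\TT_h$: the approximation error $\norm{r_h-\Pi_h r_h}{L^2(T)}$ is controlled by combining the Poincar\'e inequality above with a scaled fractional Poincar\'e estimate on $\supp(\psi_T)\subseteq\omega_h^m(T)$ (here $m$, $\kappa(\TT_h)$ and the bi-Lipschitz equivalence~\eqref{eq:equivalent Hsnorm} enter), while the $H^{1/2}$-stability of $\Pi_h$ absorbs the genuinely global cross-terms; the same Poincar\'e inequality bounds $\norm{r_h}{L^2(\Gamma)}$, so that the full $H^{1/2}$-norm, and hence $\enorm{\phi-\phi_h}$, is controlled by $\eta_h$ with a constant $\Crel$ depending on $q$, $m$ and $\kappa(\TT_h)$.

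I expect the far part at the endpoint regularity $s=1/2$ to be the main obstacle: one cannot replace $r_h$ by a piecewise constant, since step functions fail to lie in $H^{1/2}$ of an interval, so a genuinely continuous quasi-interpolant and a careful localization lemma for the Aronszajn--Slobodeckij seminorm---in the spirit of Faermann~\cite{faermann2d}---are unavoidable. The remaining bookkeeping (periodic extension and the $\tfrac34L$-restriction in~\eqref{eq:bi-Lipschitz} for closed $\Gamma=\partial\Omega$ versus the open-arc case) is routine once~\eqref{eq:bi-Lipschitz}--\eqref{eq:equivalent Hsnorm} are in place.
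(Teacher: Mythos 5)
First, a point of reference: the paper does not actually prove Theorem~\ref{thm:faermann} --- it is imported from \cite[Theorem~3.1]{fgp14} and stated without proof, so there is no in-paper argument to match your sketch against line by line. The one ingredient of that proof which this paper does reproduce is Lemma~\ref{lem:Hnorm le Faer plus}, Faermann's localization estimate $\norm{u}{H^{1/2}(\Gamma)}^2\le\sum_{z\in\NN_h}|u|_{H^{1/2}(\omega_h(z))}^2+C_1\sum_{T\in\TT_h}h_T^{-1}\norm{u}{L^2(T)}^2$, and that is exactly the tool your sketch lacks in explicit form. Your efficiency argument is correct and is the standard one: $r_h=V(\phi-\phi_h)$, the products $\omega_h(z)\times\omega_h(z)$ cover each point of $\Gamma\times\Gamma$ at most twice, so $\eta_h^2\le 2\,|r_h|_{H^{1/2}(\Gamma)}^2\lesssim\norm{\phi-\phi_h}{\H^{-1/2}(\Gamma)}^2\simeq\enorm{\phi-\phi_h}^2$, with no orthogonality used. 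Likewise the reduction of reliability to $\norm{r_h}{H^{1/2}(\Gamma)}\lesssim\eta_h$ via $\enorm{\phi-\phi_h}^2=\dual{r_h}{\phi-\phi_h}_\Gamma$ is right, and your mean-value argument from {\rm(A1)--(A2)} is the intended use of those assumptions, including the constant: from $|\langle r_h\rangle|\,|\omega|=|\dual{r_h}{1-\psi_T}_\omega|\le\sqrt{1-q}\,|\omega|^{1/2}\norm{r_h}{L^2(\omega)}$ and $\norm{r_h}{L^2(\omega)}^2=\norm{r_h-\langle r_h\rangle}{L^2(\omega)}^2+\norm{\langle r_h\rangle}{L^2(\omega)}^2$ one gets precisely $\norm{r_h}{L^2(\omega)}^2\le q^{-1}\norm{r_h-\langle r_h\rangle}{L^2(\omega)}^2$.

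The genuine gap is in your treatment of the far-field part of $|r_h|_{H^{1/2}(\Gamma)}^2$. Inserting a continuous Cl\'ement-type quasi-interpolant $\Pi_h r_h$ and invoking its $H^{1/2}$-stability cannot ``absorb the genuinely global cross-terms'': stability only yields $\norm{\Pi_h r_h}{H^{1/2}(\Gamma)}\le C\,\norm{r_h}{H^{1/2}(\Gamma)}$ with $C\ge1$, which cannot be moved to the left-hand side, and there is no orthogonality of $r_h$ against $\Pi_h r_h$ available to make that term vanish; as written this step fails. The correct device is Lemma~\ref{lem:Hnorm le Faer plus} itself, which disposes of all far-field cross-terms at once, without any interpolant, in exchange for the weighted terms $h_T^{-1}\norm{r_h}{L^2(T)}^2$; these are then controlled by your {\rm(A1)--(A2)} argument combined with the scaled fractional Poincar\'e inequality $\norm{u-\langle u\rangle}{L^2(\omega)}^2\le\tfrac{|\omega|}{2}|u|_{H^{1/2}(\omega)}^2$ and $|\supp(\psi_T)|\le|\omega_h^m(T)|\lesssim h_T$ from shape regularity. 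One further step you gloss over also needs care: for $m\ge1$ the seminorm $|r_h|_{H^{1/2}(\supp(\psi_T))}$ lives on a union of several elements and is \emph{not} simply dominated by the sum of the adjacent node-patch contributions $\eta_h(z)^2$ (the Slobodeckij seminorm is superadditive, not subadditive, under decomposition of the domain); bounding these intermediate cross-terms requires an additional covering argument --- this is where $m$ and $\kappa(\TT_h)$ enter $\Crel$ --- which \cite{fgp14} supplies and your sketch only gestures at.
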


The following two theorems are the mathematical contributions of this work to the field of IGABEM.
They apply to both, Galerkin IGABEM as well as collocation IGABEM.

\begin{theorem}\label{thm:local}
For any approximation $\phi_h\in L^2(\Gamma)$ and $r_h:= f-V\phi_h$,
the indicator $\eta_h(z) := |r_h|_{H^{1/2}(\omega_h({z}))}$  
is bounded above by the weighted-residual indicator $\mu_h(z) :=|\omega_h(z)|^{1/2}\,\norm{r_h'}{L^2(\omega_h({z}))}$
\begin{align}\label{eq:local bound}
\eta_h(z) \le \sqrt{2}\,C_\Gamma\,\mu_h(z),
\end{align}
where $C_\Gamma>0$ is the constant from \eqref{eq:bi-Lipschitz}.
\end{theorem}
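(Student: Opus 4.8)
The plan is to transport the estimate to the parameter domain by means of the arclength parametrization $\gamma_L$ and then to reduce everything to an elementary one‑dimensional inequality. Since $f\in H^{1/2}(\Gamma)$ and $V\phi_h\in H^1(\Gamma)\subseteq H^{1/2}(\Gamma)$, the residual satisfies $r_h\in H^{1/2}(\Gamma)$, so $\eta_h(z)$ is finite; if $r_h|_{\omega_h(z)}\notin H^1(\omega_h(z))$, then $\mu_h(z)=\infty$ and there is nothing to prove, so I may assume $r_h|_{\omega_h(z)}\in H^1(\omega_h(z))$.

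\emph{Step 1: reduction to the parameter domain.} The node patch $\omega_h(z)$ is a connected subarc of $\Gamma$ consisting of at most two elements, hence $|\omega_h(z)|\le 2\max_{T\in\TT_h}h_T$; for $\Gamma=\partial\Omega$, \eqref{eq:h small} gives $|\omega_h(z)|\le|\Gamma|/2\le\tfrac34 L$, while for $\Gamma\subsetneqq\partial\Omega$ no length restriction is required. Therefore $\omega_h(z)=\gamma_L(I)$ for some interval $I$ with $|I|=|\omega_h(z)|$, and \eqref{eq:equivalent Hsnorm} (a consequence of the bi-Lipschitz bound \eqref{eq:bi-Lipschitz}) applies to $r_h$. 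Moreover, since $\gamma_L$ is the arclength parametrization, $|\gamma_L^{\prime_\ell}|=|\gamma_L^{\prime_r}|=1$, so the chain rule yields $(r_h\circ\gamma_L)'=r_h'\circ\gamma_L$ almost everywhere, and the substitution $x=\gamma_L(t)$ gives $\norm{(r_h\circ\gamma_L)'}{L^2(I)}^2=\norm{r_h'}{L^2(\omega_h(z))}^2$ together with $|I|=|\omega_h(z)|$.

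\emph{Step 2: a one-dimensional estimate.} It then suffices to prove that every $v\in H^1(I)$ on a bounded interval $I$ of length $\ell$ satisfies
\begin{align*}
 |v|_{H^{1/2}(I)}^2 \le 2\,\ell\,\norm{v'}{L^2(I)}^2 .
\end{align*}
Using $v(x)-v(y)=\int_y^x v'(t)\,dt$ and the Cauchy--Schwarz inequality one obtains the pointwise bound $|v(x)-v(y)|^2\le|x-y|\,\big|\int_y^x|v'(t)|^2\,dt\big|$; dividing by $|x-y|^2$, integrating over $I\times I$, and exchanging the order of integration (Fubini) leaves
\begin{align*}
 |v|_{H^{1/2}(I)}^2 \le \int_I |v'(t)|^2\,k_I(t)\,dt ,
 \qquad
 k_I(t):=\iint_{\substack{(x,y)\in I\times I\\ t\ \text{between}\ x,y}}\frac{dx\,dy}{|x-y|}.
\end{align*}
For $I=[0,\ell]$ a direct computation gives $k_I(t)=2\big(\ell\ln\ell-(\ell-t)\ln(\ell-t)-t\ln t\big)$, and $k_I'(t)=2\ln\frac{\ell-t}{t}$ shows that $k_I$ is maximal at $t=\ell/2$ with $k_I(\ell/2)=2\ell\ln2\le 2\ell$; this proves the claim (even with the sharper constant $2\ln2$).

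\emph{Step 3: conclusion.} Combining Steps 1 and 2 with $|I|=|\omega_h(z)|$,
\begin{align*}
 \eta_h(z)^2 = |r_h|_{H^{1/2}(\omega_h(z))}^2
 \le \Cgamma^2\,|r_h\circ\gamma_L|_{H^{1/2}(I)}^2
 \le 2\,\Cgamma^2\,|\omega_h(z)|\,\norm{r_h'}{L^2(\omega_h(z))}^2
 = 2\,\Cgamma^2\,\mu_h(z)^2 ,
\end{align*}
which is \eqref{eq:local bound}. The main (and essentially only) non-routine point is the one-dimensional estimate of Step 2: the naive approaches — for instance bounding $\int_y^x|v'|^2$ by $\norm{v'}{L^2(I)}^2$ before integrating in $x,y$, or doing the inner substitution first — lead to a logarithmically divergent kernel integral, so one has to keep the singular weight $|x-y|^{-1}$ under the Fubini exchange and integrate the resulting (integrable) logarithmic kernel explicitly.
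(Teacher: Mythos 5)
Your proposal is correct, and its overall architecture coincides with the paper's: reduce the claim to a one-dimensional estimate $|v|_{H^{1/2}(I)}^2\le 2\,|I|\,\norm{v'}{L^2(I)}^2$ on an interval, then transfer it to the node patch via the arclength parametrization and the norm equivalence \eqref{eq:equivalent Hsnorm}, using \eqref{eq:h small} to ensure $|\omega_h(z)|\le\frac34 L$ for closed $\Gamma$. Where you differ is in the proof of the key one-dimensional inequality (the paper's Lemma~\ref{lem:Hhseminorm leq H1seminorm}): the paper writes the difference quotient as the average $\int_0^1 v'(\rho(s-t)+t)\,d\rho$, applies Jensen's inequality, and after a change of variables and Fubini obtains the constant $2$ by first proving the case $I=(0,1)$ and then rescaling. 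You instead apply Cauchy--Schwarz to $v(x)-v(y)=\int_y^x v'$, keep the singular weight $|x-y|^{-1}$ through the Fubini exchange, and evaluate the resulting logarithmic kernel $k_I$ explicitly; your computation of $k_I$ and its maximum $2\ell\ln 2$ at the midpoint checks out. This buys a marginally sharper constant ($2\ln 2$ in place of $2$, hence $\sqrt{2\ln 2}\,C_\Gamma$ in \eqref{eq:local bound}) at the price of an explicit antiderivative computation, whereas the paper's averaging argument avoids any singular kernel altogether and scales trivially between intervals. Your preliminary remark that one may assume $r_h|_{\omega_h(z)}\in H^1(\omega_h(z))$ (else $\mu_h(z)=\infty$) is a sensible addition, since the theorem allows arbitrary $\phi_h\in L^2(\Gamma)$ and $f$ is only assumed in $H^{1/2}(\Gamma)$.
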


If collocation IGABEM as in Section \ref{subsec:collocation igabem} is used, the patch $\omega^{p+1}(T)$ contains a collocation point and therefore a root of the residual $r_h$, for each $T\in \TT_h$.
Hence, the condition of the following theorem is fulfilled with $m=p+1$.

\begin{theorem}\label{thm:reliable}
Suppose that wither $\phi_h\in\XX_h$ is the Galerkin solution of~\eqref{eq:discrete}, where $\XX_{h}$ satisfies {\rm(A1)--(A2)}, or that the residual $r_h=f-V\phi_h$ has at least one root in each $\omega^m_h(T)$ for all $T\in\TT_h$ and some fixed $m\in \N_0$. 
Then,
\begin{align}\label{eq:thm:residual}
&C_{\rm rel}^{-1}\,\enorm{\phi-\phi_h}
\le \mu_h 
:= \Big(\sum_{{z}\in\NN_h}\mu_h(z)\Big)^{1/2}
\end{align}
with $\mu_h(z):=|\omega_h(z)|^{1/2}\,\norm{r_h'}{L^2(\omega_h({z}))}$.
The constant $C_{\rm rel}>0$ depends only on $\Gamma$, $m$, $\kappa(\TT_h)$, and, in the first case,  $q$.
\end{theorem}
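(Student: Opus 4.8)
The plan is to reduce the statement to the already-established Faermann-type bounds of Theorem \ref{thm:faermann} together with the pointwise-to-$H^{1/2}$ comparison of Theorem \ref{thm:local}. Concretely, I would combine three ingredients: (i) the reliability estimate $\enorm{\phi-\phi_h}\le\Crel\,\eta_h$, (ii) the local bound $\eta_h(z)\le\sqrt2\,C_\Gamma\,\mu_h(z)$, and (iii) summation over $z\in\NN_h$. For the Galerkin case, (i) is exactly \eqref{eq2:thm:faermann} and is available once (A1)--(A2) hold, so there is nothing to prove beyond quoting it; for the second (collocation) case, one must first establish the analogue of \eqref{eq2:thm:faermann} under the hypothesis that $r_h$ vanishes somewhere in each $\omega_h^m(T)$. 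The obvious route is to revisit the proof of Theorem \ref{thm:faermann} in \cite{fgp14} and observe that the only place where the Galerkin property enters is in controlling the residual locally; a root of $r_h$ in $\omega_h^m(T)$ provides the needed localized Poincaré/Friedrichs-type control on $\norm{r_h}{L^2(T)}$ (or its $H^{1/2}$-seminorm) in place of the Galerkin orthogonality argument. I would make this precise by following the standard BEM weighted-residual argument of \cite{cs96,cc97}: write $\enorm{\phi-\phi_h}^2=\dual{r_h}{\phi-\phi_h}_\Gamma\le\norm{r_h}{H^{1/2}(\Gamma)}\,\norm{\phi-\phi_h}{\H^{-1/2}(\Gamma)}$, bound $\norm{\phi-\phi_h}{\H^{-1/2}(\Gamma)}\lesssim\enorm{\phi-\phi_h}$ by norm equivalence, and divide; it then suffices to estimate $\norm{r_h}{H^{1/2}(\Gamma)}$ (or rather the relevant part of it) by $\mu_h$ up to the $L^2$-part, which is where the root hypothesis is used to absorb the $\norm{r_h}{L^2(\Gamma)}$ contribution into the seminorm.

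Once reliability in the form $\enorm{\phi-\phi_h}\le\Crel'\,\eta_h$ is in hand in both cases, the remaining step is purely mechanical. Using Theorem \ref{thm:local} nodewise and summing,
\begin{align*}
 \enorm{\phi-\phi_h}
 \le \Crel'\,\eta_h
 = \Crel'\Big(\sum_{z\in\NN_h}\eta_h(z)^2\Big)^{1/2}
 \le \sqrt2\,C_\Gamma\,\Crel'\Big(\sum_{z\in\NN_h}\mu_h(z)^2\Big)^{1/2}
 = \sqrt2\,C_\Gamma\,\Crel'\,\mu_h,
\end{align*}
which is \eqref{eq:thm:residual} with $\Crel:=\sqrt2\,C_\Gamma\,\Crel'$. (Note the minor abuse in \eqref{eq:thm:residual} where $\mu_h$ is written as $(\sum\mu_h(z))^{1/2}$ rather than $(\sum\mu_h(z)^2)^{1/2}$; I would state it with the squared contributions, consistent with \eqref{eq:residual} and the definition $\mu_h(z)=|\omega_h(z)|^{1/2}\norm{r_h'}{L^2(\omega_h(z))}$.) The dependence of the constant on $\Gamma$, $m$, $\kappa(\TT_h)$, and $q$ is then inherited from $\Crel'$ (which carries $m$, $\kappa(\TT_h)$, $q$ in the Galerkin case and $m$, $\kappa(\TT_h)$ in the collocation case via the Poincaré constants of the patches) and from $C_\Gamma$.

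The main obstacle is the collocation case, i.e., proving a reliability bound without Galerkin orthogonality. The key technical point is a localized estimate: if $r_h\in H^1(\Gamma)$ has a zero $x_\star\in\omega_h^m(T)$, then on the connected patch $\omega_h^m(T)$ one has $|r_h(x)|=|\int_{x_\star}^{x} r_h'\,ds|\le |\omega_h^m(T)|^{1/2}\norm{r_h'}{L^2(\omega_h^m(T))}$, hence $\norm{r_h}{L^2(\omega_h^m(T))}\lesssim |\omega_h^m(T)|\,\norm{r_h'}{L^2(\omega_h^m(T))}$ with a constant depending only on $\kappa(\TT_h)$ and $m$; this lets one reconstruct the full $H^{1/2}(\Gamma)$-norm control of $r_h$ from its $H^1$-seminorm data, feeding into the duality argument above. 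One must be careful that $r_h=f-V\phi_h$ is indeed in $H^1(\Gamma)$ (which follows from $\phi_h\in\XX_h\subset L^2(\Gamma)$ and the mapping property $V:L^2(\Gamma)\to H^1(\Gamma)$, plus $f\in H^{1/2}(\Gamma)\cap C(\Gamma)$ — here one needs $f$ smooth enough that $r_h'$ is well-defined and in $L^2$, which holds under the standing assumptions on $f$ for collocation) and that the overlap of the patches $\{\omega_h^m(T)\}_{T\in\TT_h}$ is uniformly bounded in terms of $m$ and $\kappa(\TT_h)$, so that summing the local estimates costs only a constant. Assembling these pieces and then chaining through Theorem \ref{thm:local} gives the claim.
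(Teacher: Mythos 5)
Your proposal is correct and follows essentially the same route as the paper: the Galerkin case is dispatched by chaining Theorem~\ref{thm:faermann} with Theorem~\ref{thm:local}, and the collocation case reduces $\enorm{\phi-\phi_h}$ to $\norm{r_h}{H^{1/2}(\Gamma)}$ (the paper uses the isomorphism property of $V$ where you use duality and Cauchy--Schwarz, which is equivalent), localizes that norm into patchwise $H^{1/2}$-seminorms plus weighted $L^2$-terms, and absorbs the latter via a Friedrichs inequality on $\omega_h^m(T)$ using the root of $r_h$, exactly as in the paper's proof. The only ingredient you leave implicit is the localization estimate $\norm{u}{H^{1/2}(\Gamma)}^2\le\sum_{z}|u|_{H^{1/2}(\omega_h(z))}^2+C_1\sum_{T}h_T^{-1}\norm{u}{L^2(T)}^2$ (the paper's Lemma~\ref{lem:Hnorm le Faer plus}, taken from Faermann), and you correctly note the typo $\sum_z\mu_h(z)$ versus $\sum_z\mu_h(z)^2$ in \eqref{eq:thm:residual}.
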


\subsection{Proof of Theorem~\ref{thm:local}}\quad
We only need the following lemma, whose proof is inspired by \cite[Proposition 2.2]{Nezza:2011yu}, where an analogous assertion for norms instead of seminorms is found.
The assertion itself is also stated in \cite[Lemma 7.4]{carsfaer} in a more general way.
Indeed a similar version of \eqref{eq:thm:bound} holds even for the $H^s$-seminorm, $0<s<1$. 
However, in \cite{carsfaer}, the proof  is only given for the hardest case $1/2<s<1$.

\begin{lemma}\label{lem:Hhseminorm leq H1seminorm}
For  any connected $\omega\subseteq \Gamma$, whose length satisfies $|\omega|\leq \frac{3}{4}L$ if $\Gamma=\partial \Omega$, there holds
\begin{align}\label{eq:thm:bound}
|u|_{H^{1/2}(\omega)}^2
 \le 2\,C_\Gamma^2\,|\omega|\,\norm{u'}{L^2(\omega)}^2
 \quad\text{for all }u\in H^1(\Gamma).
\end{align}
\end{lemma}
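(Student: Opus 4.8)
The plan is to transport the seminorm to the parameter domain via the arclength parametrization, prove the estimate there for an interval, and transport back. First, write $\omega = \gamma_L(I)$ for an interval $I \subseteq \R$ (or $[0,L]$) with $|I| = |\omega|$, using the arclength parametrization. By \eqref{eq:equivalent Hsnorm}, we have $|u|_{H^{1/2}(\omega)}^2 \le C_\Gamma^2\,|u\circ\gamma_L|_{H^{1/2}(I)}^2$, and by the chain rule together with $|\gamma_L'|=1$, the arclength derivative transforms isometrically, so $\|(u\circ\gamma_L)'\|_{L^2(I)}^2 = \|u'\|_{L^2(\omega)}^2$. Hence it suffices to show the one-dimensional estimate
\begin{align}\label{eq:1Dseminorm}
|v|_{H^{1/2}(I)}^2 \le 2\,|I|\,\|v'\|_{L^2(I)}^2
\quad\text{for all } v \in H^1(I),
\end{align}
applied with $v = u\circ\gamma_L$; combining with the two transport estimates yields \eqref{eq:thm:bound} with the constant $\sqrt{2}\,C_\Gamma$ inside the square root, i.e.\ the factor $2C_\Gamma^2$.

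For \eqref{eq:1Dseminorm}, write $v(x)-v(y) = \int_y^x v'(t)\,dt$ for $x,y \in I$. By Cauchy--Schwarz, $|v(x)-v(y)|^2 \le |x-y|\int_{\min(x,y)}^{\max(x,y)} |v'(t)|^2\,dt$. Dividing by $|x-y|^2$ and integrating over $I \times I$,
\begin{align}
|v|_{H^{1/2}(I)}^2 \le \int_I\int_I \frac{1}{|x-y|}\int_{\min(x,y)}^{\max(x,y)}|v'(t)|^2\,dt\,dy\,dx.
\end{align}
Then exchange the order of integration to pull $|v'(t)|^2$ out and estimate the remaining kernel integral: for fixed $t$, one bounds $\int\int_{\{t \text{ between } x,y\}} |x-y|^{-1}\,dx\,dy$, which, after the substitution measuring the distances from $t$, reduces to a bounded double integral over a region of diameter $\le |I|$ and evaluates to something $\le 2|I|$ (the elementary computation $\int_0^{|I|}\int_0^{|I|} (r+s)^{-1}\,dr\,ds$ is finite and $\lesssim |I|$; one checks the sharp constant is $\le 2|I|$). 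This gives \eqref{eq:1Dseminorm}.

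The main obstacle is the bookkeeping in the Fubini step and getting the explicit constant $2$ right: one must carefully parametrize the set $\{(x,y) \in I\times I : t \in [\min(x,y),\max(x,y)]\}$ and check that $\int\int |x-y|^{-1}$ over this set is bounded by $2|I|$ uniformly in $t \in I$, which follows since writing $x = t+s$, $y = t-r$ (or both on the same side) the kernel becomes $(r+s)^{-1}$ with $r,s \in [0,|I|]$ and $\int_0^{|I|}\!\int_0^{|I|} (r+s)^{-1}\,dr\,ds = 2|I|\log 2 < 2|I|$; one also needs to handle the case $x,y$ on the same side of $t$, which only improves the bound. A secondary, purely technical point is the use of \eqref{eq:equivalent Hsnorm}, which requires the length restriction $|\omega| \le \frac34 L$ for $\Gamma = \partial\Omega$ — exactly the hypothesis assumed — so no additional care is needed there. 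Finally, note the density of $H^1(\Gamma)$ restrictions in $H^1(\omega)$ makes the pointwise representation $v(x)-v(y)=\int_y^x v'$ legitimate for all $v \in H^1$.
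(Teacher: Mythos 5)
Your proposal is correct. The overall structure coincides with the paper's: write $\omega=\gamma_L(I)$, use \eqref{eq:equivalent Hsnorm} to transport the $H^{1/2}$-seminorm to the parameter interval (here the hypothesis $|\omega|\le\frac34 L$ enters, as you note), observe that the arclength derivative transforms isometrically, and reduce everything to the one-dimensional inequality $|v|_{H^{1/2}(I)}^2\le 2|I|\,\|v'\|_{L^2(I)}^2$. Where you differ is in the proof of that 1D inequality. The paper writes the difference quotient as the average $\int_0^1 v'\bigl(\rho(s-t)+t\bigr)\,d\rho$, applies Cauchy--Schwarz in $\rho$, extends $v'$ by zero to $\R$ and uses translation invariance of the $L^2$-norm (first on $(0,1)$, then by scaling); you instead use the fundamental theorem of calculus, Cauchy--Schwarz on $\int_y^x v'$, and Fubini, reducing matters to the kernel integral $\iint_{\{t\in[\min(x,y),\max(x,y)]\}}|x-y|^{-1}\,dx\,dy$. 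Your computation of that kernel is sound: with $a+b=|I|$ the two straddling quadrants give $2\bigl((a+b)\log(a+b)-a\log a-b\log b\bigr)\le 2|I|\log 2<2|I|$, so you in fact obtain the slightly sharper constant $2\log 2$ in place of $2$. (Your aside about pairs $x,y$ on the same side of $t$ is vacuous --- such pairs do not appear in the region after Fubini --- but this is harmless.) Both routes are elementary and of comparable length; yours is the more "hands-on" Hardy-type computation with an explicit sharp(er) constant, while the paper's averaged-derivative trick avoids any case analysis on the geometry of the integration region and scales cleanly from the unit interval.
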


\begin{proof}
We recall that for a finite interval $I\subset \R$, $H^1(I)$ coincides with the space of all absolutely continuous functions on $\overline{I}$ with $L^2$ derivative; see, e.g., \cite[page 306]{evans}.

\noindent
{\bf Step 1:}
First we consider $I=(0,1)$ and prove 
\begin{equation}\label{eq:H-estimate 01}
|u|_{H^{1/2}(I)}^2\leq 2|{u}|_{H^1(I)}^2.
\end{equation}
We use the transformation theorem, with $r=\rho(s-t)+t$ and $s-t=\sigma$, as well as the Cauchy Schwarz inequality to get
\begin{align*}
|{u}|_{H^{1/2}(I)}^2&=\int_{I}\int_{I}{\left|\frac{u(s)-u(t)}{s-t}\right|^2}{\,ds}{\,dt}\\
&=\int_I\int_I{\left|\frac{\int_{(0,s)}{u'(r)}{\,dr}-\int_{(0,t)}{u'(r)}{\,dr}}{s-t}\right|^2}{\,ds}{\,dt}\\
&=\int_I\int_I{\left|\int_{I}{u'\big(\rho(s-t)+t\big)}{\,d\rho}\right|^2}{\,ds}{\,dt}\\
&\leq \int_I\int_I{\int_{I}{\big|u'\big(\rho(s-t)+t\big)\big|^2}{\,d\rho}}{\,ds}{\,dt}\\
&=\int_I\int_{(-t,1-t)}{\int_{I}{\big|u'\big(\rho\sigma+t\big)\big|^2}{\,d\rho}}{\,d\sigma}{\,dt}.
\end{align*}
We formally extend $u'$ by zero to $\R$.
This and the Fubini theorem lead to
\begin{align*}
|u|_{H^{1/2}(I)}^2&\leq\int_{I}\int_{(-1,1)}{\int_{I}{\big|u'\big(\rho\sigma+t\big)\big|^2}{\,d\rho}}{\,d\sigma}{\,dt}\\
&\leq \int_{I}\int_{(-1,1)}{\int_{\R}{\big|u'\big(\rho\sigma+t\big)\big|^2}{\,dt}}{\,d\sigma}{\,d\rho}\\
&= \int_{I}\int_{(-1,1)}{\norm{u'}{L^2(\R)}^2}{\,d\sigma}{\,d\rho}=2|u|_{H^1(I)}^2.
\end{align*}

\noindent
{\bf Step 2:}
If $I\subseteq \R$ is an arbitrary finite interval, it holds 
\begin{equation}\label{eq:H-estimate I}
|{u}|_{H^{1/2}(I)}^2\leq 2|I||{u}|_{H^1(I)}^2.
\end{equation}
Without loss of generality, let $I=(c,d)$ be open. 
We define the function $u_{(0,1)}:(0,1)\to \R:\tau\mapsto u\big(\tau(d-c)+c\big)$.
Obviously, it holds  $u_{(0,1)}\in H^1(0,1)$ with $u_{(0,1)}'(\tau)=(d-c)u'\big(\tau(d-c)+c\big)$. 
The transformation theorem with $s=\sigma(d-c)+c$, $t=\tau(d-c)+c$, and $r=\rho(d-c)+c$, and \eqref{eq:H-estimate 01} yield
\begin{align*}
&|{u}|_{H^{1/2}(I)}^2=\int_{I}\int_{I}{\left|\frac{u(s)-u(t)}{s-t}\right|^2}{\,ds}{\,dt}\\
&\quad=\int_{(0,1)}\int_{(0,1)}{\left|\frac{u\big(\sigma(d-c)+c\big)-u\big(\tau(d-c)+c\big)}{\sigma-\tau}\right|^2}{\,ds}{\,dt}\\
&\quad=|{u_{(0,1)}}|_{H^{1/2}(0,1)}^2\leq 2|{u_{(0,1)}}|_{H^{1/2}(0,1)}^2\\
&\quad=2\int_{(0,1)}{|u_{(0,1)}'(\rho)|^2}{\,d\rho}=2(d-c)\int_{I}{|u'(r)|^2}{\,dr}\\&\quad=2|I||{u}|_{H^1(I)}.
\end{align*}

\noindent
{\bf Step 3:}
We show \eqref{eq:thm:bound}.
 Let $I$ be a real interval with $\gamma_L(I)=\omega$. 
Then,  \eqref{eq:equivalent Hsnorm} and  \eqref{eq:H-estimate I} give
\begin{align*}
|u|_{H^{1/2}(\omega)}&=|{u}|_{H^{1/2}(\gamma_L(I))}^2\leq C_{\Gamma}^2|{u\circ\gamma_L}|_{H^{1/2}(I)}^2\\&\leq 2C_{\Gamma}^2|\omega| |{u\circ\gamma_L}|_{H^1(I)}^2=2C_\Gamma^2 |\omega|\norm{u'}{L^2(\omega)}^2.
\end{align*}
This concludes the proof.
\end{proof}
\subsection{Proof of Theorem~\ref{thm:reliable}}
We use the following estimate from \cite[Lemma~2.3]{faermann2d}; {see \cite[Proposition 2.13]{diplarbeit} for a detailed proof}.

\begin{lemma}\label{lem:Hnorm le Faer plus}
There exists a constant $C_1>0$ such that, for all $u\in H^{1/2}(\Gamma_{})$, it holds
\begin{align}
\norm{u}{H^{1/2}(\Gamma_{})}^2\leq \sum_{{z}\in \mathcal{N}_h} |u|_{H^{1/2}(\omega_h({z}))}^2+C_1\sum_{T\in\mathcal{T}_h} h_T^{-1}\norm{u}{L^2(T)}^2.
\end{align}
The constant only depends on $\Gamma_{}$ and $\kappa(\TT_h)$.\qed

\end{lemma}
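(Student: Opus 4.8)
The plan is to localize the full $H^{1/2}(\Gamma)$-norm by separating its $L^2$-part from its Sobolev--Slobodeckij seminorm, and then decomposing the double integral defining the seminorm according to element pairs. The $L^2$-part is immediate: since $h_T\le|\Gamma|$ for every $T\in\TT_h$, one has $\norm{u}{L^2(\Gamma)}^2=\sum_{T\in\TT_h}\norm{u}{L^2(T)}^2\le|\Gamma|\sum_{T\in\TT_h}h_T^{-1}\norm{u}{L^2(T)}^2$, which already has the required form. It thus remains to control the seminorm $|u|_{H^{1/2}(\Gamma)}^2=\int_\Gamma\int_\Gamma|u(x)-u(y)|^2|x-y|^{-2}\,dy\,dx$.

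First I would write $\Gamma\times\Gamma=\bigcup_{T,T'\in\TT_h}(T\times T')$ and split the pairs into \emph{near} pairs, where $\overline T\cap\overline{T'}\neq\emptyset$ (in particular $T=T'$), and \emph{far} pairs, where $\overline T\cap\overline{T'}=\emptyset$. For a near pair, both $T$ and $T'$ lie in a common node patch $\omega_h(z)$ (the patch of a shared node, or of either endpoint when $T=T'$), so the integral over $T\times T'$ is dominated by $|u|_{H^{1/2}(\omega_h(z))}^2$. As the integrand is nonnegative and each near pair is contained in at least one patch, summing the near contributions is bounded by $\sum_{z\in\NN_h}|u|_{H^{1/2}(\omega_h(z))}^2$, since over-counting the pairs that lie in several patches only enlarges the right-hand side. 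This produces the first term.

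The far contribution carries the real work. Here I would use $|u(x)-u(y)|^2\le2|u(x)|^2+2|u(y)|^2$ together with symmetry to reduce matters to bounding $\sum_{T}\int_T|u(x)|^2\big(\sum_{T'\text{ far}}\int_{T'}|x-y|^{-2}\,dy\big)\,dx$. Passing to the arclength parametrization and writing $s,\sigma$ for the arclength coordinates of $x,y$, the bi-Lipschitz estimate~\eqref{eq:bi-Lipschitz} gives $|x-y|\ge C_\Gamma^{-1}\,d(s,\sigma)$ with $d$ the (periodic) arclength distance; for the closed curve this geodesic distance never exceeds $L/2<\tfrac34L$, so~\eqref{eq:bi-Lipschitz} always applies. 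Shape regularity then forces the nearest far element to lie at arclength distance $\gtrsim h_T/\kappa(\TT_h)$ from any $x\in T$, whence $\sum_{T'\text{ far}}\int_{T'}|x-y|^{-2}\,dy\le C_\Gamma^2\int_{d(s,\sigma)\gtrsim h_T/\kappa(\TT_h)}d(s,\sigma)^{-2}\,d\sigma\lesssim C_\Gamma^2\,\kappa(\TT_h)\,h_T^{-1}$. Integrating over $x\in T$ and summing over $T\in\TT_h$ yields the weighted $L^2$-term with a constant depending only on $\Gamma$ (through $C_\Gamma$ and $L$) and on $\kappa(\TT_h)$, as claimed.

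I expect the far part to be the main obstacle: one must secure a uniform lower bound on $|x-y|$ for pairs with disjoint closures and then integrate the $|x-y|^{-2}$ singularity over the far region. The subtle case is the closed boundary, where Euclidean and arclength distance could a priori decouple for points far apart along the curve; this is resolved by noting that the geodesic arclength distance stays below $\tfrac34L$, so the bi-Lipschitz bound persists, while shape regularity keeps the inner integral of order $h_T^{-1}$.
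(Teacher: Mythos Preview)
Your proposal is correct and follows precisely the standard near/far decomposition argument that underlies the result in the cited references \cite[Lemma~2.3]{faermann2d} and \cite[Proposition~2.13]{diplarbeit}; the paper itself does not supply a proof but merely quotes these sources. The handling of the closed-boundary case via the geodesic arclength distance (which is at most $L/2<\tfrac34L$, so \eqref{eq:bi-Lipschitz} applies) and the use of shape regularity to bound the far kernel integral by $\mathcal{O}(h_T^{-1})$ are exactly the right ingredients.
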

\begin{proof}[Proof of Theorem~\ref{thm:reliable}]
If the residual is orthogonal to some $\XX_h$ satisfying {\rm(A1)--(A2)}, the assertion follows at once from  Theorem \ref{thm:faermann} in combination with Equation \eqref{eq:local bound}.
If the residual has local roots, we first note that
\begin{align}
\norm{\phi-\phi_h}{\H^{-1/2}(\Gamma)}\simeq \norm{f-V\phi_h}{H^{1/2}(\Gamma)}=\norm{r_h}{H^{1/2}(\Gamma)},
\end{align}
since $V$ is an isomorphism. The hidden constants only depend on $\Gamma$.

Taking $u=r_h$ in Lemma \ref{lem:Hnorm le Faer plus}, it only remains to estimate the sum $\sum_{T\in\mathcal{T}_h} h_T^{-1}\norm{r_h}{L^2(T)}^2$.
Note that shape regularity yields $|\omega_h^m(T)|\le (2m+1)\kappa(\TT_h)^{m}h_T$.
Replacing $T$ by $\omega^m(T)$, we apply Friedrich's inequality to see
\begin{align*}
&\sum_{T\in\mathcal{T}_h} h_T^{-1}\norm{r_h}{L^2(T)}^2\le\sum_{T\in\TT_h} h_T^{-1}\norm{r_h}{L^2(\omega_h^m(T))}^2\\
&\quad\le\sum_{T\in\TT_h} \frac{|\omega_h^m(T)|^2}{h_T}\norm{r_h'}{L^2(\omega^m(T))}^2\\
&\quad\le (2m+1)^2 \kappa(\TT_h)^{2m} \sum_{T\in\TT_h} h_T\norm{r_h'}{L^2(\omega_h^m(T))}^2\\
&\quad\le (2m+1)^3 \kappa(\TT_h)^{3m} \sum_{T\in\TT_h} h_T\norm{r_h'}{L^2(T)}^2\\
&\quad\le (2m+1)^3 \kappa(\TT_h)^{3m} \sum_{z\in\NN_h} |\omega_h(z)|\norm{r_h'}{L^2(\omega(z))}^2.
\end{align*}
This concludes the proof.
\end{proof}



\section{Conclusion}
\label{section:conclusion}


\subsection{Analytical results}

In this work, we considered adaptive BEM for weakly-singular integral equations $V\phi=f$ associated to elliptic PDEs in 2D. As model example served the 2D Laplacian, but the results apply as long as $V:\H^{-1/2}(\Gamma)\to H^{1/2}(\Gamma)$ is an elliptic isomorphism. With the residual $r_h:=f-V\phi_h$, we transferred the weighted-residual error estimator 
\begin{align}\label{conclusion:mu}
 \mu_h = \norm{h^{1/2}r_h'}{L^2(\Gamma)}
\end{align}
proposed in~\cite{cs96,cc97} from standard BEM with lowest-order polynomials to IGABEM, where we considered the Galerkin method as well as collocation. For either discretization, we proved that $\mu_h$ is reliable
\begin{align}\label{conclusion:reliable}
 \enorm{\phi-\phi_h} \le \Crel\,\mu_h;
\end{align}
see Theorem~\ref{thm:reliable}. In our preceding work~\cite{fgp14}, we considered the residual error estimator
\begin{align}\label{conclusion:eta}
 \eta_h = \Big(\sum_{z\in\NN_h}\int_{\omega_h({z})}\!\int_{\omega_h({z})}\hspace*{-4mm}\frac{|r_h(x)-r_h(y)|^2}{|x-y|^2}\,dy\,dx\Big)^{1/2}
\end{align}
proposed in~\cite{faermann2d}. In~\cite{fgp14}, we transferred this estimator from standard BEM with piecewise polynomials to IGABEM. Independently of the discretization, we  proved the general efficiency estimate 
\begin{align}\label{conclusion:efficient}
 \eta_h \le \Ceff\,\enorm{\phi-\phi_h},
\end{align}
while our proof of the converse estimate $\enorm{\phi-\phi_h} \le \Crel\,\eta_h$ is restricted to Galerkin IGABEM. However, the combination of~\eqref{conclusion:reliable} and~\eqref{conclusion:efficient} provides also full error control 
\begin{align}
 \Ceff^{-1}\,\eta_h \le \enorm{\phi-\phi_h} \le \Crel\,\mu_h
\end{align}
for collocation IGABEM computations in 2D. Moreover, this estimate implies the global relation $\eta_h \lesssim \mu_h$, and we even proved
\begin{align}
 \mu_h(z) \le C_{\rm loc}\, \eta_h(z)
 \quad\text{for all }z\in\NN_h
\end{align}
for the respective nodal contributions defined in~\eqref{eq:faermann} resp.\ \eqref{eq:residual};
see Theorem~\ref{thm:local} which holds independently of the discretization employed.

\subsection{Numerical results}

We proposed an adaptive algorithm which is capable to steer the mesh-refinement as well as the knot multiplicity in Galerkin and collocation IGABEM computations; see Algorithm~\ref{the algorithm}. Numerical experiments in Section~\ref{section:numerics} underline that generic singularities of the (unknown) exact solutions lead to reduced experimental convergence behavior if the underlying mesh is not appropriately graded. This is a well-known fact for standard BEM with piecewise polynomials, but also applies to IGABEM. 
Consequently, the gain of adaptive IGABEM (resp. the loss in case of uniform meshes) is huge due to the higher-order ansatz functions of IGABEM, and therefore adaptivity seems to be a must to exploit the full potential of isogeometric analysis.
In several numerical experiments, we showed that the proposed algorithm is capable to recover the optimal order of convergence. The gain of IGABEM is that the algorithm chooses smooth NURBS, where the exact solution appears to be smooth, while discontinuities and singularities are well detected and appropriately resolved. Compared to standard BEM with discontinuous piecewise polynomials, this leads to a smaller number of degrees of freedom for comparable accuracies.

For collocation IGABEM as well as Galerkin IGABEM and independently of the (uniform or adaptive) mesh-re\-fine\-ment, we observed that
\begin{align}
 \eta_h \simeq \enorm{\phi-\phi_h} \simeq \mu_h,
\end{align}
i.e., both error estimators are efficient and reliable. The efficiency indices $\eta_h/\enorm{\phi-\phi_h}$ and $\mu_h/\enorm{\phi-\phi_h}$ appeared to be $\le3$, i.e., the overestimation of the energy error is very moderate. 
  We note that only the equivalence $\eta_h \simeq \enorm{\phi-\phi_h}$ for Galerkin IGABEM as well as the bounds $\enorm{\phi-\phi_h}\lesssim \mu_h$ and $\eta_h\lesssim \enorm{\phi-\phi_h}$  have thoroughly been proved mathematically.

\subsection{Open questions and future work}
All considered numerical experiments show optimal convergence of the estimator and the error.
Understanding this observation mathematically in the spirit of \cite{axioms} is one of our goals for future research.
However, it is questionable if an analogous version of  the reduction property on refined element domains \cite[{\rm(A2)}]{axioms} can be proved for the Faermann estimator $\eta_h$.
Indeed, this is yet an open problem even for standard BEM with piecewise polynomials; see \cite{mitscha}, where at least convergence of an $h$-adaptive algorithm with $\eta_h$ is analyzed.
For the weighted-residual error estimator $\mu_h$ the axioms of \cite{axioms} are satisfied for standard Galerkin BEM with piecewise polynomials, see \cite[Section~5.4]{axioms}.  
For collocation IGABEM  
there remain two challenging mathematical questions:
First, one needs further investigation on the unique solvability of the discrete system.
Second, the quasi-orthogonality \cite[\rm(A3)]{axioms} is unclear for collocation methods.

As mentioned, we observed in all numerical experiments reliability as well as efficiency of the used error estimators.
However, it remains to mathematically verify the reliability estimate $\enorm{\phi-\phi_h}\lesssim \eta_h$ for collocation BEM and the efficiency estimate $\mu_h\lesssim \enorm{\phi-\phi_h}+{\rm osc}$,   at least for some higher-order oscillation terms ${\rm osc}$.
Again, these estimates are yet open problems even for standard BEM.
For lowest-order Galerkin BEM, the efficiency estimate is proved in \cite[Theorem 4]{feischl} under additional regularity assumptions on the Dirichlet data $g$ in \eqref{eq:Laplace}.

Finally, the ultimate goal is of course to analyze and apply  the estimators $\eta_h$ and $\mu_h$ in 3D Galerkin IGABEM.
For 3D one has to consider, e.g., T-splines \cite{igabem3d} or hierarchical B-splines \cite{buffa}, because, in contrast to multivariate NURBS, they naturally allow for  local mesh refinement.
\cite{faermann3d} shows that $\eta_h$ is reliable and efficient for standard BEM with piecewise polynomials, whereas \cite{cms} proves reliability for $\mu_h$.
In \cite[Section~5.4]{axioms} optimal convergence of adaptive $h$-refinement for $\mu_h$  is proved. The estimate $\eta_h\lesssim\mu_h$ as well as plain convergence for $\eta_h$-based adaptivity is analyzed in \cite{mitscha}.
The transfer of the mentioned results from standard BEM to adaptive IGABEM leaves interesting and challenging questions for future research.


\bigskip

\noindent
{\bf Acknowledgement.} The authors acknowledge support through the Austrian Science 
Fund (FWF) under grant P27005 \emph{Optimal adaptivity for BEM and FEM-BEM coupling}. 
In addition, DP and MF are supported through the FWF doctoral school \emph{Nonlinear PDEs} 
funded under grant W1245.

\bibliographystyle{alpha}
\bibliography{literature}

\end{document}